\newtheorem{theorem}{Theorem}[section]
\newtheorem{lemma}[theorem]{Lemma}
\newtheorem{proposition}[theorem]{Proposition}
\newtheorem{corollary}[theorem]{Corollary}
\newtheorem{observation}[theorem]{Observation}
\theoremstyle{definition}
\newtheorem{definition}[theorem]{Definition}
\newtheorem{example}[theorem]{Example}
\theoremstyle{remark}
\numberwithin{equation}{section}
\title[Simultaneous triangularization]{Simultaneous triangularization over max-algebras}
\author[Askar]{Askar Ali M$^{1}$}
\address{$^{1}$ Azim Premji University, Bhopal Campus, Bhopal, India}
\email{askar.m@apu.edu.in}
\author[Sachindranath]{Sachindranath Jayaraman$^{2,\ast}$}
\address{$^{2}$ School of Mathematics, IISER Thiruvananthapuram, India}
\email{sachindranathj@iisertvm.ac.in, sachindranathj@gmail.com}
\thanks{$^\ast$Corresponding author}
\author[Himadri]{Himadri Mukherjee$^3$}
\address{$^3$ Department of Mathematics, BITS Pilani K. K. Birla Goa Campus, India}
\email{himadrim@goa.bits-pilani.ac.in}
\subjclass[2020]{Primary:  15A80 , 15A21, 15A15 Secondary: 15A99}
\keywords{Max algebras; (simultaneous) triangularization; commutators; commutants; 
unicellular matrices; tropical determinant}
\begin{document}

\noindent

\begin{abstract}
The purpose of this article is to investigate triangularization and simultaneous triangularization of matrices 
over max algebras using graph theoretic methods. We establish a connection between commutators and commutants 
with simultaneous triangularization over max algebras. We also define the notion of characteristic polynomial 
of a collection in terms of the tropical determinant and determine when it can be written as a product of linear terms. 
Algorithms for all of the above are also brought out.
\end{abstract}

\maketitle

\section{Introduction}

We work throughout over the field $\mathbb{R}$ of real numbers. We concern ourselves with only those matrices whose 
entries are nonnegative real numbers. Other notations and terminologies used in this work will be introduced below. 
By a max algebra, we mean the triple $(\mathbb{R}_{+}, \oplus, \otimes)$, where $\mathbb{R}_{+}$ denotes the set of nonnegative real numbers, $\oplus$ denotes the binary operation of taking the maximum of two nonnegative numbers and $\otimes$ is the usual multiplication of two numbers. There are several abstract examples of max algebras. The one given 
above is more amenable to work with, while dealing with nonnegative matrices. Another example is the set of real numbers, together with $-\infty$, equipped with the binary operations of maximization and addition, respectively. The latter system is isomorphic to the former one via  the exponential map. Max algebras have found numerous applications in several fields 
such as optimization, discrete event dynamical systems, scheduling problems and many more as can be seen from the 
monographs (more details can be found in \cite{Butkovic, Economics-note, Max@work}). The one we work with here 
also has a nice combinatorial appeal, as pointed out in \cite{Butkovic-0}. There are many other references on this topic 
as can be inferred from the MR database. We restrict ourselves to only those needed here for this work.

\medskip
We shall denote by $M_n(\mathbb{R}_+)$ the collection of all $n \times n$ matrices with entries from the max-algebra 
described in the previous paragraph. Given any two such matrices $A$ and $B$, their matrix product, which we denote 
by $AB$ (as in the classical case), is defined by 
\[ [ AB ]_{ij}\ \ =\ \ \max_{k}\ \lbrace a_{ik}\ . \ b_{kj} \rbrace. \]  We shall also denote by $GL_n(\mathbb{R}_+)$ 
the set of all invertible matrices in a max algebra. It is easy to prove that an element of $GL_n(\mathbb{R}_+)$ is necessarily 
a generalized permutation matrix - one that is a product of a diagonal matrix and a permutation matrix. Therefore, 
similarity with respect to invertible matrices in a max algebra follows a special structure. Given an $n \times n$ 
nonnegative matrix $A$, there is a natural way to associate a simple weighted directed graph or simply the digraph 
$G_A$ to the matrix as follows: $G_A$ has $n$ vertices, say $1, \dots, n$, such that there is an edge from $i$ to $j$ with 
weight $a_{ij}$ if and only if $a_{ij} > 0$. By a circuit, we always mean a simple circuit. In contrast, our paths may include 
a vertex and or an edge more than once. 

Recall that a family $\{A_1, \ldots, A_N\}$ of complex matrices is simultaneously triangularizable if there exists an 
invertible matrix $S$ such that $S^{-1}A_iS$ are upper triangular matrices for all $1 \leq i \leq N$. A celebrated result by Frobenius asserts that any family of commuting matrices is simultaneously triangularizable \cite{Newman}. Another 
well known result of Radjavi says that a family $\{A_1, \ldots, A_N\}$ of complex matrices is simultaneously triangularizable 
if and only if $\text{trace} (A_iA_jA_l) = \text{trace} (A_lA_jA_i)$ for every $1 \leq i, j, l \leq N$. There is abundant literature 
on this topic as can be evidenced in the MR database.  A classic on this topic is the monograph \cite{Radjavi-Rosenthal} by 
Radjavi and Rosenthal. 

We point out the following papers \cite{Dubi, Shemesh, Szep, Yahaghi} in this work as we will make use of and extend  
similar results as proved in those papers to max-algebras. A brief description of (some of) the results obtained in the above 
cited papers are as follows. In \cite{Shemesh}, Shemesh proved that if matrices $A$ and $B$ are such that 
$A[A,B] = 0 \ \& \ B[A,B] = 0$ (where $[A,B]$ is the additive commutator), then $A$ and $B$ are simultaneously 
triangularizable (Theorem $2$, \cite{Shemesh}), which is closely related to Burnside's theorem (see Chapter $1$ of \cite{Radjavi-Rosenthal}). In an earlier work, Szep proved that two projectors $A$ and $B$ are simultaneously 
triangularizable if and only if their additive commutator $[A,B]$ is nilpotent (see \cite{Szep} for details). It should 
be pointed out that a more general result was obtained by Laffey much earlier. Laffey's theorem says that over an 
algebraically closed field of characteristic zero, two $n \times n$ matrices $A$ and $B$ are simultaneously 
triangularizable, if the additive commutator $C$ has rank $2$ and the matrix $A^{i}B^{j}C^{k}$ is nilpotent for 
$0 \leq i, j \leq n-1, \ 1 \leq k \leq 2$ (see the main Theorem in \cite{Laffey}). The commutant of a family also has 
interesting relationship with simultaneous triangularization. The monograph by Radjavi and Rosenthal 
\cite{Radjavi-Rosenthal} is a good source of reference on this. We point out an interesting result due to 
Yahaghi in this context. Yahaghi's main focus was to prove that a family of triangularizable compact operators has a 
hyperinvariant subspace and presents several sufficient conditions for simultaneous triangularization of a family of compact 
operators together with its commutant. Recall that a matrix is unicellular if it is triangularizable with 
a unique chain of invariant subspaces (see for instance \cite{Radjavi-Rosenthal}). We point out an interesting result by 
Yahaghi (Corollary $11$ of \cite{Yahaghi}), where it is shown that if a family $\mathcal{F}$ is simultaneously 
triangularizable, with at least one member of the family being unicellular, then the family 
$\mathcal{F} \cup \mathcal{F}^{\prime}$ (the family obtained by adjoining to $\mathcal{F}$ the commutant) is also simultaneously triangularizable. Yet another paper that we have cited above in this work is due to Dubi \cite{Dubi}. Dubi introduced the notion of the characteristic polynomial of a family $\mathcal{F}$ of complex matrices in relation to a weaker 
notion of simultaneous triangularization, which coincides with the usual notion of simultaneous triangularization when the collection consists of only $2 \times 2$ matrices.

We present a brief account of max-algebraic spectral theory and point out how commuting matrices in max-algebras 
can be simultaneously reduced to a specific block upper triangular form, known as the Frobenius normal form. We also 
point out the connection with Kleene star of a matrix $A$. 
Given a matrix $A\in M_n(\mathbb{R}_+)$, a scalar $\lambda \in \mathbb{R}_+$ and a nonzero vector 
$x\in\mathbb{R}_+^n$ form an \emph{eigen pair} of $A$ if $Ax = \lambda x$. It is not hard to verify that the eigenspectrum 
of $A$, denoted by $Eig(A)$, which is the collection of all eigenvectors of $A$ is a max-algebraic polytope (see \cite{Maclagan-Strumfels} or \cite{Morrison-Tran}). For a cycle $C = i_1\! \to i_2\! \to \cdots \to i_k\!\to i_1$ in the 
digraph $G_A$ of $A$, its weight and cycle mean are given by 
$w(C) = a_{i_1 i_2} a_{i_2 i_3} \cdots a_{i_k i_1} = \prod_{\ell=1}^k a_{i_\ell i_{\ell+1}}$ and 
$\mu(C)=\big(w(C)\big)^{1/k}$. The maximum cycle mean of $A$ (also known as the Perron root) is then 
$\lambda(A) = \max\{\mu(C): C\text{ a cycle of }G_A\}$, with the convention that $\lambda(A) = 0$ when $G_A$ 
has no circuits. The following theorem (Theorem $4.2.4$ from \cite{Butkovic}) is well known. 

\begin{proposition}\label{principal-eigenvalue}
$\lambda(A)$ is an eigenvalue for any matrix $A \in M_n(\mathbb{R}_+)$. If $A$ is irreducible, $\lambda(A)$ is the only eigenvalue of $A$.
\end{proposition}

\begin{definition}\label{Frobenius-normal-form}
A matrix $A\in M_n(\mathbb{R}_+)$ is in \emph{Frobenius normal form} (FNF) if there exists $P\in \mathrm{GL}_n(\mathbb{R}_+)$ such that
	\[
	P^{-1}AP=
	\begin{bmatrix}
		A_{11} & A_{12} & \cdots & A_{1t}\\
		0      & A_{22} & \cdots & A_{2t}\\
		\vdots & \ddots & \ddots & \vdots\\
		0      & \cdots & 0      & A_{tt}
	\end{bmatrix},
	\]
where each diagonal block $A_{ii}$ is irreducible. The blocks correspond to the strongly connected components of $G_A$. 
When every block is $1\times 1$, this reduces to an upper triangular form.
\end{definition}

Any reducible matrix can be written as a block upper triangular matrix via a permutation matrix, as above \cite{Bapat-Raghavan}. The Frobenius normal form is a max-algebraic analogue of block triangularization in classical nonnegative matrix theory. This decomposition plays a fundamental role in the spectral theory of max-algebraic matrices, 
as the Perron roots and eigenvectors can be analyzed blockwise through the Frobenius structure. A \emph{class} of $A$ 
is a strongly connected component of $G_A$. The following proposition, which follows from Theorem $4.8$ of \cite{Katz-Schneider-Sergeev}, provides a simultaneous Frobenius reduction of commuting matrices over max-algebras.

\begin{proposition}[Theorem 4.8 (i), \cite{Katz-Schneider-Sergeev}]
Let $A_1,\dots,A_r \in M_n(\mathbb{R}_+)$ be pairwise commuting matrices such that all classes of each $A_i$ have 
distinct Perron roots. Then there exists a permutation matrix $P$ such that all matrices
	\[
	P^{-1}A_iP, \quad i=1,\dots,r
	\]
are in Frobenius normal form with the same block partition.
\end{proposition}

Let 
\[
C = A_1 \oplus \cdots \oplus A_r.
\]
By Theorem $4.8$ of \cite{Katz-Schneider-Sergeev}, all matrices $A_1,\dots,A_r$ have the same classes and the same 
reduced digraph. Let $P$ be a permutation matrix that brings $C$ to Frobenius normal form so that $P^{-1}CP$ is block upper triangular with irreducible diagonal blocks corresponding to its classes. Partition each $P^{-1}A_iP$ conformally with this Frobenius block structure. then, from the proof of Theorem $4.8$ of \cite{Katz-Schneider-Sergeev}, a Frobenius form of each $A_i$ is a refinement of the Frobenius form of $C$, and since commuting irreducible blocks remain irreducible, each matrix $P^{-1}A_iP$ is also in Frobenius normal form with the same block partition. Hence, there exists a permutation matrix $P$ 
that simultaneously reduces all $A_i$ to Frobenius normal form.

If $A \in M_n(\mathbb{R}_+)$, then the series $I \oplus A \oplus A^2 \oplus \dots $ converges to 
$I \oplus A \oplus A^2 \oplus \dots A^{n-1}$ if and only if $\lambda(A) \leq 1$ (where $I$ denotes the identity matrix 
in $M_n(\mathbb{R}_+)$; this notation will be used without ambiguity in this section). This series, denoted by $A^{\ast}$, 
is called the Kleene star of $A$ and is multiplicatively idempotent. It is well known that the Kleene star of $A$ plays a 
crucial role in the description of the max-algebraic eigenvectors (see for instance \cite{Butkovic}). If $\lambda(A) > 1$, 
then we can scale the matrix $A$ by $\frac{1}{\lambda(A)}$, so that its Kleene star series converges. Then, by 
Theorem $4.6.1$ of \cite{Butkovic}, the columns of $A^{\ast}$ (after rescaling, if required) corresponding to the critical 
nodes (nodes in the corresponding digraph, which contribute to the maximum cycle mean) are max-eigenvectors of $A$ 
and they generate all eigenvectors of $A$. The Kleene star of $A$ is not distributive with respect to $\oplus$; however, 
the Kleene star of a direct sum of matrices equals the direct sum of the Kleene star of the matrices. A matrix 
$A \in M_n(\mathbb{R}_+)$ is said to be Kleene star if $A = A^{\ast}$ (so that $A$ is idempotent). The following interesting result due to Morrison and Tran, done in the min-plus algebra is worth pointing out, although we do not make explicit use of it.

\begin{theorem}\label{Thm-Morrison-Tran}
Suppose $A, B \in M_n(\mathbb{R})$ are Kleene stars. If $A \oplus B$ is a Kleene star, then
$A$ and $B$ commute, that is, $A \otimes  B = B \otimes A$. If $A \otimes B = B \otimes A$, then 
$(A \oplus B)^{{\otimes}2}$ is the Kleene star of $A \oplus B$. In particular, only for $n = 2$ and $n = 3$,  
we have: $A \otimes B = B \otimes A$ if and only if $A \oplus B = A \otimes B = B \otimes A$.
\end{theorem}

A brief description of the results obtained follows. We begin by proving that a matrix $A$ over a max algebra is 
triangularizable if and only if the associated digraph $G_A$ has no directed cycle of length at least two (see Theorem \ref{traingle}). This result is crucial to this paper and as pointed in Example \ref{example-0}, this serves as a crucial 
difference between the classical setting and the max-algebraic setting. As a consequence of Theorem \ref{traingle}, a 
topological order of $G_A$ gives the required permutation that puts the matrix in triangular form. The main results 
come next. This is subdivided into three subsections. We begin the first subsection by proving that a pair $A, B$ of 
nonnegative matrices are simultaneously triangularizable if and only if the union of their corresponding digraphs, 
$G_{A \oplus B}$, contains no directed multi-vertex cycles (see Theorem \ref{first result}). We explore several 
consequences of this result. The second subsection concerns max-commutators and commutants in relation to simultaneous triangularization of a pair of matrices. There are three main results in this subsection.  All of these 
results illustrate the similarities between the classical and the max-algebraic setting in relation to simultaneous 
triangularization. After proving some preliminary lemmas (Lemmas \ref{commutator-1}, \ref{row-anni}, and 
\ref{lem:corner}), we prove one of the main results of this subsection (Theorem \ref{thm:simtri}), which is a generalization 
of a classical result due to Shemesh (Theorem $2$, \cite{Shemesh}). We then proceed to prove that two projectors $A$ 
and $B$ are simultaneously triangularizable if their additive max-commutator is nilpotent (Theorem \ref{projector simul. triangle}), although the converse is not true (Example \ref{counterex-projector thm}). This is again a generalization of 
Szep result mentioned in the previous paragraph. The third result concerns unicellular matrices (Definition \ref{unicellular} 
and Example \ref{unicellular-example}), which we believe has not appeared in the max-algebra context, and prove that 
if $A$ and $B$ are simultaneously triangularizable with one of them unicellular, the family obtained by adjoining to the pair 
their commutant is also simultaneously triangularizable (Theorem \ref{commutant-simultaneous triangularization}). Once 
again, this is similar to a result due to Yahaghi \cite{Yahaghi}, as mentioned in the previous paragraph. In the last subsection 
of this paper, we introduce the notion of the characteristic polynomial of a pair of matrices $\{A,B\}$, denoted by 
$P_{A,B}(z)$, using the tropical determinant and prove that if $A$ and $B$ are simultaneously triangularizable, then $P_{A,B}(z)$ is a product of $n$-linear factors (Theorem \ref{sim-triangular = linear factors}). A possible motivation to 
study this from max-algebraic spectral theory is pointed out, along with an example. The converse being not true 
(Example \ref{example-2.1}), we set out to determine necessary and sufficient conditions for $P_{A,B}(z)$ to be a product 
of linear factors (Theorem \ref{nasc-linear factors}). This section ends with a corollary involving a pair of row diagonally dominant matrices, where their characteristic polynomial is a product of linear factors 
(Corollary \ref{diagonal dominance-cor}). Three algorithms, the first one for triangularizability, the second for simultaneous triangularizability and the third one for linear factorization are presented, along with basic complexity issues. Wherever 
possible, examples are provided to illustrate the results obtained. The paper ends with a section on concluding remarks, where 
we summarize the results obtained in this paper and also point out an interesting question that could be investigated in future.

\section{Preliminaries}

Some preliminary definitions and results are presented in this section. 

\begin{definition}
For a matrix $A = (a_{ij}) \in M_n(\mathbb{R}_+)$, the directed graph associated with $A$, 
denoted by $G_A$, has as vertices $\{1,\dots,n\}$ and edges $i \to j$ when $a_{ij} > 0$.
\end{definition}

\begin{definition}\cite{Deograph}
Let $G = (V,E)$ be a finite directed graph, with $|V| = n$ and edge set $E$. A \textit{topological ordering} 
$\preceq$ of $G$ is a bijection $\pi:V \rightarrow \{1,\dots, n\}$ (an ordering of the vertices $v_1,\dots,v_n$) 
such that for every edge $v_i \to v_j$ in $E$ we have $\pi(v_i) < \pi(v_j)$ (each pair of edge points  
moves forward in the ordering).
\end{definition}

The following proposition (Theorem $14-4$ of \cite{Deograph}) will be used in the sequel.

\begin{proposition}\label{topo. ordering = acyclic}
A finite directed graph admits a topological ordering if and only if it is acyclic.
\end{proposition}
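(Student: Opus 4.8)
The plan is to establish the biconditional by proving each implication separately, handling the forward direction by a quick contradiction and the reverse direction by induction on the number of vertices.

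For the forward direction, suppose the digraph admits a topological ordering $\pi$ and, seeking a contradiction, that it also contains a directed cycle $v_{i_1} \to v_{i_2} \to \cdots \to v_{i_k} \to v_{i_1}$. Applying the defining property of $\pi$ to each consecutive edge along the cycle yields the chain of strict inequalities $\pi(v_{i_1}) < \pi(v_{i_2}) < \cdots < \pi(v_{i_k}) < \pi(v_{i_1})$, which collapses to $\pi(v_{i_1}) < \pi(v_{i_1})$, an absurdity. Hence a graph carrying a topological ordering can have no directed cycle, i.e.\ it is acyclic.

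For the reverse direction I would induct on $n = |V|$, the base case $n \le 1$ being trivial. The crucial preliminary step, which I expect to be the main obstacle, is the claim that every finite acyclic digraph with $n \ge 1$ vertices has a \emph{source}, that is, a vertex of in-degree zero. I would prove this by contradiction: if every vertex had an incoming edge, then starting at any vertex and repeatedly following an incoming edge backward produces an arbitrarily long walk; since $V$ is finite this walk must eventually revisit some vertex, and the segment of the walk between the two visits is a directed cycle, contradicting acyclicity.

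Granting the existence of a source $v$, I would set $\pi(v) = 1$, delete $v$ together with all its incident edges, and note that the resulting digraph on $n-1$ vertices is still acyclic, since deleting a vertex cannot create a cycle. By the induction hypothesis this subgraph admits a topological ordering, which I take with labels $2, \dots, n$. Combining these assignments gives a bijection $\pi : V \to \{1, \dots, n\}$; every edge out of $v$ moves forward because $v$ carries the smallest label, every edge among the remaining vertices moves forward by the inductive ordering, and there are no edges into $v$ to violate the condition. Thus $\pi$ is a topological ordering, completing the induction and the proof.
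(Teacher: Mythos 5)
The paper does not prove this proposition at all: it is imported verbatim as Theorem 14--4 of the cited graph theory text of Deo, so there is no in-paper argument to compare against. Your proof is the standard one and it is correct: the forward direction via the impossible chain of strict inequalities around a cycle, and the reverse direction by induction after extracting a source vertex, whose existence you correctly justify by following incoming edges backward until the finiteness of $V$ forces a repeated vertex and hence a cycle. One point worth being explicit about, given how the surrounding paper uses the result: with the paper's definition of topological ordering (strict inequality $\pi(v_i) < \pi(v_j)$ on every edge), a self-loop already obstructs the ordering, so ``acyclic'' here must be read as excluding loops as well; your argument handles this automatically (a length-one cycle gives $\pi(v) < \pi(v)$ in the forward direction, and a looped vertex is never a source in the reverse direction), but the paper's later applications deliberately allow self-loops and restrict attention to multi-vertex cycles, so the proposition is being applied there to the loop-deleted digraph.
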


We are now in a position to define and prove our first result of this paper.

\begin{definition}\label{triangularizable}
A matrix $A \in M_n(\mathbb{R}_+)$ is said to be \textit{triangularizable} if there exists a 
$P \in GL_n(\mathbb{R}_+)$ such that $P^{-1}AP$ is an upper triangular matrix. 
\end{definition}

Our first nontrivial result is the following.

\begin{theorem}\label{traingle}
A matrix $A \in M_n(\mathbb{R}_+)$ is triangularizable if and only if the corresponding digraph $G_A$ has no directed multi-vertex cycles. 
\end{theorem}

\begin{proof}
Let us label the vertices of the digraph $G_A$ as $\{1, 2, \dots, n\}$. Suppose $G_A$ is acyclic, except 
possibly for self-loops (corresponding to diagonal entries). Then, there exists a topological ordering 
$\pi: \{1,2, \cdots, n\} \rightarrow \{1,2, \cdots, n\}$, such that whenever $i \to j \in G_A$, 
$\pi(i) < \pi(j)$ for distinct $i, j$. Let $P$ be the permutation matrix corresponding to $\pi$. 
Then $\hat{A}:= P^{-1}AP$ is an upper triangular matrix. This is because when $a_{ij} >0$, there exists 
an edge $i \to j \in G_A$ and by assumption, $\pi(i) \leq \pi(j)$. Thus, in $\hat{A}$, the corresponding 
entry $\hat{a}_{\pi(i) \pi(j)} > 0$ is either on the diagonal or above the diagonal. \\

Conversely, if $A$ is triangularizable, then, there exists a permutation matrix 
$P \in GL_n(\mathbb{R}_+)$ such that $P^{-1}AP$ is upper triangular. Proposition \ref{topo. ordering = acyclic} implies 
that the  digraph $G_{(P^{-1}AP)}$ is acyclic, except possibly for self-loops. Since the digraphs $G_{(P^{-1}AP)}$ and 
$G_A$ are isomorphic, it follows that $G_A$ is also acyclic, with possible self loops.   
\end{proof}

The following example is quite pertinent, before we proceed further.

\begin{example}\label{example-0}
Let $A = \begin{pmatrix}
	              2 & 5 & 3\\
	              6 & 1 & 0\\
	              0 & 2 & 5
\end{pmatrix}$. This matrix is not triangularizable through any generalized permutation matrix $P$ as there is a 
nontrivial cycle.
\end{example}

Recall that any complex square matrix is unitarily similar to an upper triangular matrix (Theorem $2.3.1$ of 
\cite{Horn-Johnson}), whereas any real matrix is similar to a quasi-upper triangular matrix (Theorem $2.3.4$ of 
\cite{Horn-Johnson}). Example \ref{example-0} illustrates this important difference between the classical setting and 
the max algebraic setting.

The following definition is routine.

\begin{definition}\label{nilpotent}
A matrix $A \in M_n(\mathbb{R}_+)$ is \textit{nilpotent}, if $A^k = 0$ for some 
$k \in \mathbb{N}$.
\end{definition}

Nilpotent matrices are an important subset of triangularizable matrices.  From Theorem 
\ref{traingle}, we observe that nilpotent matrices also have acyclic digraphs. 

\medskip
\noindent
{\bf Remark:} From now on, we work with a collection $\{A, B\}$ of matrices, although all of the results presented 
carry over for an arbitrary collection $\mathcal{F}$ of matrices with suitable modifications.
 
\section{Main Results}

The main results are proved in this section. This is subdivided into three subsections for ease of 
reading. 

\subsection{Some preliminary results on simultaneous triangularization}\hspace*{0.5cm}

We begin with the definition of simultaneous triangularization. 

\begin{definition}\label{simultaneous triangularization}
A pair of matrices $A,B \in M_n(\mathbb{R}_+)$ are said to be \textit{simultaneously triangularizable} if there exists a $P \in GL_n(\mathbb{R}_+)$ such that both $P^{-1}AP$ and $P^{-1}BP$ are upper triangular matrices. 
\end{definition}

\begin{observation}\label{useful observation}
Let $G_A$ and $G_B$ be the digraphs of two matrices $A$ and $B$. Then, the following identity holds: 
$G_A \cup G_B = G_{A \oplus B}$. Moreover, the weight of the union of the digraphs takes the maximal weight whenever 
there is an edge between two vertices in both digraphs.
\end{observation}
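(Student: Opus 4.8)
The plan is to unwind both sides of the claimed identity at the level of individual edges and edge-weights, since the two weighted digraphs live on the common vertex set $\{1,\dots,n\}$; equality of weighted digraphs amounts to equality of edge sets together with equality of the weights assigned to those edges. I would therefore fix an ordered pair of vertices $(i,j)$ and verify the correspondence in both respects.

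First I would recall that, by definition of the associated digraph, $G_A$ carries the edge $i \to j$ precisely when $a_{ij} > 0$, in which case its weight is $a_{ij}$, and likewise $G_B$ carries $i \to j$ exactly when $b_{ij} > 0$ with weight $b_{ij}$. By the definition of the union of two weighted digraphs on a shared vertex set, $i \to j$ is an edge of $G_A \cup G_B$ if and only if it is an edge of $G_A$ or of $G_B$, that is, if and only if $a_{ij} > 0$ or $b_{ij} > 0$.

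Next I would compute the entry of $A \oplus B$ in position $(i,j)$, namely $(A \oplus B)_{ij} = a_{ij} \oplus b_{ij} = \max\{a_{ij}, b_{ij}\}$, and note that $G_{A \oplus B}$ carries the edge $i \to j$ exactly when this entry is positive. The crux is the elementary fact that, for nonnegative reals, $\max\{a_{ij}, b_{ij}\} > 0$ holds if and only if at least one of $a_{ij}, b_{ij}$ is positive; hence $\max\{a_{ij}, b_{ij}\} > 0 \iff (a_{ij} > 0 \ \text{or} \ b_{ij} > 0)$, and the edge sets of $G_{A \oplus B}$ and $G_A \cup G_B$ coincide.

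Finally I would match the weights. On any edge $i \to j$ of $G_{A \oplus B}$, the weight is $\max\{a_{ij}, b_{ij}\}$ by the definition of $\oplus$; when the edge occurs in only one of $G_A, G_B$ the other entry is $0$ and the maximum returns the single positive weight, whereas when $a_{ij} > 0$ and $b_{ij} > 0$ the weight is the larger of the two. This is exactly the prescription that the union takes the maximal weight whenever an edge between two vertices is present in both digraphs, which completes the identification. There is no genuine obstacle here: the entire content is the observation that a maximum of nonnegative numbers is positive precisely when some argument is positive, so the only step requiring care is the bookkeeping of the weight convention on edges shared by $G_A$ and $G_B$.
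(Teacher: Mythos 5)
Your proof is correct, and since the paper states this as an Observation without any proof, your entrywise verification (edge $i\to j$ present in $G_{A\oplus B}$ iff $\max\{a_{ij},b_{ij}\}>0$ iff $a_{ij}>0$ or $b_{ij}>0$, with the weight $\max\{a_{ij},b_{ij}\}$ matching the stated maximal-weight convention) is precisely the argument the authors leave implicit. Nothing is missing.
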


The following theorem comes out as a consequence of Theorem \ref{traingle}. We nevertheless prove the 
theorem for the sake of completeness.

\begin{theorem}\label{first result}
Let $A,B \in M_n(\mathbb{R}_+)$ be two triangularizable matrices. Then, $A$ and $B$ are simultaneously triangularizable 
if and only if the union of their digraphs $G_A \cup G_B$ has no directed multi-vertex cycles.  
\end{theorem}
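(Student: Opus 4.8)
The plan is to reduce the simultaneous problem for the pair $\{A,B\}$ to the single-matrix triangularization problem for $A \oplus B$, and then invoke Theorem \ref{traingle} together with the identity $G_A \cup G_B = G_{A \oplus B}$ recorded in Observation \ref{useful observation}. The one elementary computation that makes everything work is that conjugation by an invertible matrix commutes with $\oplus$: since the permutation part of a $P \in GL_n(\mathbb{R}_+)$ merely relabels indices and the diagonal part rescales each entry by a positive scalar, one checks directly that $P^{-1}(A \oplus B)P = (P^{-1}AP) \oplus (P^{-1}BP)$. I would establish this identity first, before splitting into the two implications.

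For the forward implication, suppose some $P \in GL_n(\mathbb{R}_+)$ simultaneously triangularizes $A$ and $B$, so that both $P^{-1}AP$ and $P^{-1}BP$ are upper triangular. The entrywise maximum of two upper triangular matrices is again upper triangular, so by the preliminary identity $P^{-1}(A \oplus B)P$ is upper triangular; that is, $A \oplus B$ is triangularizable. Theorem \ref{traingle} then forces $G_{A \oplus B}$ to have no directed multi-vertex cycles, and since $G_A \cup G_B = G_{A \oplus B}$ by Observation \ref{useful observation}, this is exactly the asserted conclusion.

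For the converse, assume $G_A \cup G_B = G_{A \oplus B}$ has no directed multi-vertex cycles. By Theorem \ref{traingle} the matrix $A \oplus B$ is triangularizable, so there is a $P \in GL_n(\mathbb{R}_+)$ with $P^{-1}(A \oplus B)P$ upper triangular. Applying the preliminary identity once more, $(P^{-1}AP) \oplus (P^{-1}BP)$ is upper triangular, so all of its subdiagonal entries vanish. Here I would invoke nonnegativity: a maximum of two nonnegative numbers is zero precisely when both are zero, so each subdiagonal entry of $P^{-1}AP$ and of $P^{-1}BP$ must itself vanish. Hence both are upper triangular and the single $P$ triangularizes the pair.

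The step I expect to be the crux — and the only genuinely max-algebraic one — is this last use of the sign condition in the converse: it is nonnegativity interacting with the maximum operation that lets the one topological order furnished for $G_{A \oplus B}$ serve both matrices simultaneously, a phenomenon with no direct analogue over a general field. I would also note explicitly that passing from bare permutation matrices to generalized permutation matrices costs nothing, since the diagonal scaling $D^{-1}(\cdot)D$ multiplies each entry by a positive scalar and therefore preserves the zero pattern; the upper triangular shape is governed entirely by the permutation part, hence by a topological ordering of $G_{A \oplus B}$ exactly as in the proof of Theorem \ref{traingle}.
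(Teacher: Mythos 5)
Your proposal is correct and follows essentially the same route as the paper: reduce to triangularizability of $A \oplus B$ via Theorem \ref{traingle} and Observation \ref{useful observation}, then use nonnegativity to split the upper triangular form of $(P^{-1}AP) \oplus (P^{-1}BP)$ back into the individual summands. Your explicit justification of the conjugation identity and of the zero-pattern argument merely spells out steps the paper leaves implicit.
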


\begin{proof}
Let the union of the digraphs $G_A \cup G_B$ have no directed multi-vertex cycles. Recall from Observation 
\ref{useful observation}, $G_A \cup G_B = G_{A \oplus B}$. 
By Theorem \ref{traingle}, $A \oplus B$ is triangularizable. Therefore there exists a 
$P \in GL_n(\mathbb{R}_+)$ such that $P^{-1} (A \oplus B)P$ is upper triangular. Thus, 
$P^{-1}AP \oplus P^{-1}BP$ is upper triangular. Then, $[P^{-1}AP \oplus P^{-1}BP]_{ij} = 0$, for every $i>j$. Since the 
entries are nonnegative, the maximum being zero implies that both the summands are zero. Therefore, $[P^{-1}AP]_{ij} = 0$ 
and $[P^{-1}BP]_{ij} = 0$ for every $i > j$. That is, $P^{-1}AP$ and $P^{-1}BP$ are both  upper triangular matrices. 
Consequently, $A$ and $B$ are simultaneously triangularizable. 

Conversely, if $A$ and $B$ are simultaneously triangularizable, then for some invertible 
$P \in GL_n(\mathbb{R}_+)$, both $P^{-1}AP$ and $P^{-1}BP$ are upper triangular. This 
means $P^{-1} (A \oplus B)P$ is also upper triangular and hence, the digraph $G_{A \oplus B}$ 
has no directed multi-vertex cycles.  
\end{proof}

The following corollary is immediate.

\begin{corollary}\label{cor-first result}
Let $A, B \in M_n(\mathbb{R}_+)$ be two triangularizable matrices. If $G_A \subseteq G_B$ or 
$G_B \subseteq G_A$, then $A$ and $B$ are simultaneously triangularizable. 
\end{corollary}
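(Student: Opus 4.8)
The plan is to reduce everything to the criterion already established in Theorem \ref{first result}, which characterizes simultaneous triangularizability of a triangularizable pair purely in terms of the absence of directed multi-vertex cycles in the union $G_A \cup G_B$. Since the hypothesis gives a containment of digraphs, the union degenerates to the larger of the two graphs, and the triangularizability of that matrix alone will supply the needed acyclicity.

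Concretely, I would argue by symmetry and assume without loss of generality that $G_A \subseteq G_B$; the case $G_B \subseteq G_A$ is handled identically by interchanging the roles of $A$ and $B$. Under this containment, every edge of $G_A$ is already an edge of $G_B$ (both digraphs being defined on the common vertex set $\{1,\dots,n\}$), so the edge set of the union satisfies $G_A \cup G_B = G_B$. This is the only genuine observation in the argument, and it is immediate from the definition of the union of digraphs recorded in Observation \ref{useful observation}.

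Next I would invoke the triangularizability of $B$. By Theorem \ref{traingle}, since $B$ is triangularizable, its digraph $G_B$ has no directed multi-vertex cycles. Combining this with the previous step, the union $G_A \cup G_B = G_B$ likewise has no directed multi-vertex cycles. Applying Theorem \ref{first result} to the triangularizable pair $A, B$ then yields that $A$ and $B$ are simultaneously triangularizable, completing the proof.

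I do not expect any substantive obstacle here: the statement is a direct corollary, and all the real work has already been done in Theorems \ref{traingle} and \ref{first result}. The only point requiring a moment's care is the bookkeeping of the two symmetric containment cases, which is dispatched by the \emph{without loss of generality} reduction above; no additional cycle analysis or construction of an explicit conjugating permutation is necessary, since Theorem \ref{first result} already packages that construction.
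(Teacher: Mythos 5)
Your argument is correct and is exactly the intended one: the paper states this corollary without proof as an immediate consequence of Theorems \ref{traingle} and \ref{first result}, and your reduction (the containment collapses the union to the larger digraph, whose acyclicity follows from triangularizability of that matrix) is precisely the omitted justification. No discrepancy with the paper's approach.
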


\subsection{Max commutators and commutants}\hspace*{0.5cm}

We now investigate the connection between simultaneous triangularization and commutators and commutants of a 
pair of matrices. We begin this section with the following definition.

\begin{definition}\label{max-commutator}
Let $A,B \in M_n(\mathbb{R}_+)$. The \textit{max commutator} of $A$ and $B$ is defined by 
$C = [A,B]_{\oplus}:= AB \oplus BA$.
\end{definition}

We prove below a few lemmas in order which will be used in the main result of this section.

\begin{lemma}\label{commutator-1}
Let $A, B\in M_n(\mathbb{R}_+)$ be simultaneously triangularizable. Then, the family $\{A,B, [A,B]_{\oplus}\}$ is also simultaneously triangularizable.
\end{lemma}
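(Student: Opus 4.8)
The plan is to show that the very same $P \in GL_n(\mathbb{R}_+)$ that simultaneously triangularizes $A$ and $B$ also triangularizes $[A,B]_{\oplus}$. By Definition \ref{simultaneous triangularization}, simultaneous triangularizability of $\{A,B\}$ supplies a generalized permutation matrix $P$ with $\hat A := P^{-1}AP$ and $\hat B := P^{-1}BP$ both upper triangular. First I would record that conjugation by such a $P$ is a homomorphism for both max-algebraic operations: since $PP^{-1}=I$ and the $\otimes$-product is associative, one has $P^{-1}(XY)P = (P^{-1}XP)(P^{-1}YP)$; and because multiplication by a nonnegative scalar distributes over $\max$ while the underlying permutation merely reindexes the entries, one also has $P^{-1}(X\oplus Y)P = P^{-1}XP \oplus P^{-1}YP$.

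Granting this, I would compute
\[
P^{-1}[A,B]_{\oplus}P \;=\; P^{-1}(AB \oplus BA)P \;=\; \hat A\hat B \oplus \hat B\hat A,
\]
which reduces the lemma to checking that $\hat A\hat B \oplus \hat B\hat A$ is upper triangular. The next step is the observation that the $\otimes$-product of two upper triangular matrices is again upper triangular: a term $\hat a_{ik}\otimes \hat b_{kj}$ contributing to $[\hat A\hat B]_{ij}$ can be nonzero only when $\hat a_{ik}>0$ and $\hat b_{kj}>0$, which forces $i\le k$ and $k\le j$, hence $i\le j$; therefore every entry strictly below the diagonal vanishes. Since the entrywise $\oplus$ of two matrices that are zero below the diagonal is again zero below the diagonal, it follows that $\hat A\hat B$, $\hat B\hat A$, and their max are all upper triangular.

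Combining these observations, $P^{-1}[A,B]_{\oplus}P$ is upper triangular, so the single matrix $P$ simultaneously triangularizes the entire family $\{A,B,[A,B]_{\oplus}\}$, which is exactly the assertion. The main point requiring care — and the only place where anything could go wrong — is the verification that conjugation by $P$ is compatible with both $\otimes$ and $\oplus$. This rests on $P$ being a generalized permutation matrix, which by the remark in the introduction is the only kind of invertible element of $GL_n(\mathbb{R}_+)$; it is precisely this structure that makes $P^{-1}$ a genuine two-sided max-algebraic inverse and guarantees that conjugation carries the triangular form through the product-and-max computation. Everything else is a routine entrywise check.
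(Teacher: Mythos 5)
Your proposal is correct and follows essentially the same route as the paper's own proof: conjugate the max-commutator by the same $P$, use that conjugation distributes over $\otimes$ and $\oplus$, and observe that products and entrywise maxima of upper triangular matrices remain upper triangular. The extra verifications you supply (the homomorphism property of conjugation and the entrywise check for triangularity of products) are details the paper leaves implicit, and they are all sound.
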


\begin{proof}
Let $A$ and $B$ be simultaneously triangularizable through a generalized permutation matrix $P$ so that $P^{-1}AP$ and 
$P^{-1}BP$ are upper triangular matrices. We then have the following: 
$P^{-1}[A,B]_{\oplus}P = (P^{-1}AP) (P^{-1}BP) \oplus (P^{-1}BP) (P^{-1}AP)$. 
Since the sum and product of upper triangular matrices are again upper triangular, we see that $P^{-1}[A,B]_{\oplus}P$ is 
upper triangular. 
\end{proof}

Our next result concerns an \textit{annihilator} result when both $AC$ and $BC$ are zero.

\begin{lemma}\label{row-anni}
If $AC = 0$ and $BC = 0$, then for every index $t$, the following statements hold:
\begin{itemize}
\item[(a)] If some $a_{it}>0$ (that is, the $t^{th}$ column of $A$ is nonzero), then the $t^{th}$ row of $C$ is 
identically $0$.
\item[(b)] If some $b_{it}>0$ (that is, the $t^{th}$ column of $B$ is nonzero), then the $t^{th}$ row of $C$ is 
identically $0$.
\end{itemize}
\end{lemma}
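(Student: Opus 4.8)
The plan is to exploit the one feature of the max algebra that makes this lemma essentially automatic: a maximum of finitely many nonnegative reals vanishes if and only if every one of those reals vanishes. First I would unpack the hypothesis $AC = 0$ entrywise. By the definition of the max product, $[AC]_{ij} = \max_k\, a_{ik}\, c_{kj}$, so the equation $AC = 0$ asserts that $\max_k\, a_{ik}\, c_{kj} = 0$ for every pair $(i,j)$. Because all entries of matrices in $M_n(\mathbb{R}_+)$ are nonnegative, each term $a_{ik}c_{kj}$ is nonnegative, and a maximum of nonnegative numbers is $0$ exactly when each term is $0$; hence $a_{ik}c_{kj} = 0$ for every triple $(i,j,k)$.

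With this termwise vanishing in hand, part (a) is immediate. Suppose the $t$th column of $A$ is nonzero, say $a_{it} > 0$ for some index $i$. Specializing the relation $a_{ik}c_{kj} = 0$ to the value $k = t$ yields $a_{it}c_{tj} = 0$ for every $j$. Since $a_{it}$ is a positive real and ordinary multiplication of reals has no zero divisors, we may cancel $a_{it}$ to conclude $c_{tj} = 0$ for all $j$; that is, the $t$th row of $C$ is identically zero. Part (b) then follows verbatim after replacing $A$ by $B$ and invoking $BC = 0$ in place of $AC = 0$.

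I do not expect a genuine obstacle here; the only subtlety worth flagging in the write-up is the logical step that the max being zero forces each individual product to vanish, which rests squarely on the nonnegativity of all entries built into the definition of $M_n(\mathbb{R}_+)$. The statement would fail over a semiring lacking this positivity, so the cleanest proof should make that dependence explicit rather than manipulating the max product as if it were an ordinary matrix product.
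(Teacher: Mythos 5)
Your proposal is correct and follows essentially the same route as the paper: both arguments reduce $AC=0$ to the termwise vanishing $a_{it}c_{tj}=0$ (the paper via the inequality $0=\max_k a_{ik}c_{kj}\geq a_{it}c_{tj}$, you via the observation that a max of nonnegatives vanishes only if each term does) and then cancel the positive factor $a_{it}$. No gaps.
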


\begin{proof}
From $AC=0$ we have $0 = (AC)_{ij} = \displaystyle \max_{k} a_{ik}c_{kj} \geq a_{it}c_{tj}$ for all $i, j$. Therefore, 
if some $a_{it}>0$, then $c_{tj}=0$ for all $j$. The second statement is proved in a similar way.
\end{proof}

\begin{lemma}\label{lem:corner}
Let $A$ and $B$ be nilpotent. If the digraph $G_{A\oplus B}$ contains a directed cycle, then there exist indices 
$u, w$ such that $C_{uw} > 0$.
\end{lemma}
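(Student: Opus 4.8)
The plan is to read off a positive entry of $C = [A,B]_{\oplus} = AB \oplus BA$ directly from a suitable pair of consecutive edges along the given cycle. First I would record the structural fact that since $A$ and $B$ are nilpotent they have no positive diagonal entries, and so (as observed after Theorem \ref{traingle}) both $G_A$ and $G_B$ are acyclic. In particular, the directed cycle guaranteed to exist in $G_{A \oplus B} = G_A \cup G_B$ (Observation \ref{useful observation}) cannot lie entirely inside $G_A$, nor entirely inside $G_B$.

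Write the cycle as $v_0 \to v_1 \to \cdots \to v_{m-1} \to v_0$ with indices read modulo $m$, and call the edge $v_i \to v_{i+1}$ \emph{$A$-capable} if $a_{v_i v_{i+1}} > 0$ and \emph{$B$-capable} if $b_{v_i v_{i+1}} > 0$; by Observation \ref{useful observation}, every edge of the cycle is $A$-capable or $B$-capable (possibly both). Since the cycle is not contained in $G_A$, at least one edge fails to be $A$-capable and is therefore purely $B$-capable; symmetrically, at least one edge is purely $A$-capable. The heart of the argument is then a transition step: starting from a purely $A$-capable edge and walking forward along the cycle until the first $B$-capable edge is encountered, the edge immediately preceding it must be $A$-capable (it is either the starting purely-$A$ edge, or it precedes the first $B$-capable edge and so is $A$-capable by the dichotomy above). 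This yields consecutive edges $v_{j-1} \to v_j$ and $v_j \to v_{j+1}$ with $a_{v_{j-1} v_j} > 0$ and $b_{v_j v_{j+1}} > 0$.

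With such a mixed pair in hand the conclusion is immediate: $(AB)_{v_{j-1},\, v_{j+1}} = \max_k a_{v_{j-1} k}\, b_{k\, v_{j+1}} \geq a_{v_{j-1} v_j}\, b_{v_j v_{j+1}} > 0$, whence $C_{uw} > 0$ with $u = v_{j-1}$ and $w = v_{j+1}$ (note $u = w$ is permitted, for instance when the cycle has length two). Had the walk instead produced a $B$-capable edge followed by an $A$-capable one, the same bound applied to $BA$ would give the positive entry.

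The step I expect to be the main obstacle is precisely this transition/walk argument, because edges can be simultaneously $A$-capable and $B$-capable, so one cannot merely assign each edge a single colour and hunt for a colour change. The robust formulation is to track only the \emph{first} occurrence of a $B$-capable edge following a purely $A$-capable one, exploiting that any edge which is not $B$-capable is forced to be $A$-capable; this sidesteps the ambiguity of the "both" edges. A minor point worth stating explicitly is that nilpotency also rules out self-loops, so the cycle is automatically multi-vertex and the two consecutive edges are genuinely distinct.
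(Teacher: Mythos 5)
Your proof is correct and follows essentially the same route as the paper: locate two consecutive edges of the cycle, the first coming from $G_A$ and the second from $G_B$, and read off $(AB)_{uw} \geq a_{u v}\, b_{v w} > 0$. In fact your explicit walk argument (tracking the first $B$-capable edge after a purely $A$-capable one) carefully justifies the existence of such a mixed consecutive pair, a step the paper merely asserts ``as depicted below,'' so your write-up is, if anything, more complete than the published one.
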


\begin{proof}
Since $A$ and $B$ are nilpotent, the digraphs $G_A$ and $G_B$ contain no directed
cycles of length at least two. Suppose that $G_{A\oplus B}=G_A\cup G_B$
contains a directed cycle of length $\ell\ge 2$, say
\[
v_0 \to v_1 \to \cdots \to v_{\ell-1} \to v_0.
\]
If all the edges of this cycle belong to $G_A$, then the same vertices form a directed
cycle in $G_A$, contradicting the acyclicity of $G_A$. Similarly, the cycle cannot lie
entirely in $G_B$. Hence there is at least one edge of the cycle belonging to $G_A$
and at least one belonging to $G_B$.

Label each edge $v_i\to v_{i+1}$ (indices taken modulo $\ell$) by $A$ if it lies in
$G_A$ and by $B$ if it lies in $G_B$. This produces a cyclic word
$e_1,e_2,\cdots, e_{\ell-1}$ in the alphabet $\{A,B\}$
containing both letters. If $e_i=e_{i+1}$ for every $i$, then all $e_i$ would be identical, contradicting the fact that both $A$ and $B$ occur. Therefore, there exists an index $r$ such that
$e_r\neq e_{r+1}$. Without loss of generality, assume that
$v_r\to v_{r+1}$ is an edge of $G_A$ and $v_{r+1}\to v_{r+2}$ is an edge of $G_B$.
Set $u:=v_r$ and $w:=v_{r+2}$.

By definition of max-matrix multiplication,
\[
(AB)_{uw}=\max_k \{a_{uk}b_{kw}\}\ge a_{u\,v_{r+1}}\,b_{v_{r+1}\,w}.
\]
Since both $a_{u\,v_{r+1}}$ and $b_{v_{r+1}\,w}$ are strictly positive, it follows
that $(AB)_{uw}>0$. Hence
\[
C_{uw}=\max\{(AB)_{uw},(BA)_{uw}\}\ge (AB)_{uw}>0,
\]
and therefore the $u$-th row of $C$ is nonzero.	
\end{proof}

Combining the above lemmas, we have the following theorem.

\begin{theorem}\label{thm:simtri}
Let $A, B\in M_n(\mathbb{R}_+)$ be nilpotent. If $AC = BC = 0$, then $A$ and $B$ are simultaneously 
triangularizable.
\end{theorem}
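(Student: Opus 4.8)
The plan is to reduce the claim to the combinatorial criterion of Theorem \ref{first result} and then exclude cycles by chaining the two preceding lemmas. First I would note that since $A$ and $B$ are nilpotent, their digraphs $G_A$ and $G_B$ are acyclic (as recorded just after Definition \ref{nilpotent}), so Theorem \ref{traingle} makes each of $A$ and $B$ individually triangularizable. This places us exactly in the hypothesis of Theorem \ref{first result}, according to which it now suffices to prove that the union $G_{A\oplus B}$ contains no directed multi-vertex cycle.

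I would then argue by contradiction. Suppose $G_{A\oplus B}$ does contain a directed multi-vertex cycle. Applying Lemma \ref{lem:corner}, the nilpotency of $A$ and $B$ together with this cycle yields indices $u,w$ with $C_{uw} > 0$; in particular the $u$-th row of $C$ is nonzero. The vertex $u$ produced by that lemma lies on the cycle, so it has an incoming cycle-edge $v \to u$ in $G_{A\oplus B}$. By Observation \ref{useful observation} this edge comes from $G_A$ or from $G_B$, that is, $a_{vu} > 0$ or $b_{vu} > 0$; in either case the $u$-th column of $A$ (respectively $B$) is nonzero.

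Next I would invoke Lemma \ref{row-anni}, whose hypothesis $AC = BC = 0$ is precisely what we are given. Since the $u$-th column of $A$ or of $B$ is nonzero, part (a) or part (b) of that lemma forces the $u$-th row of $C$ to be identically zero, directly contradicting $C_{uw} > 0$ obtained above. Hence no directed multi-vertex cycle can exist in $G_{A\oplus B}$, and Theorem \ref{first result} then yields the simultaneous triangularization of $A$ and $B$.

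The main obstacle I anticipate is the bookkeeping of which cycle-edge at $u$ is used and of what type. Lemma \ref{lem:corner} manufactures $C_{uw} > 0$ from an \textit{outgoing} $A$-edge followed by a $B$-edge at $u$, whereas the contradiction I want comes from an \textit{incoming} edge at $u$. I would therefore check carefully that the same vertex $u$ simultaneously carries the incoming cycle-edge (to trigger Lemma \ref{row-anni}) and the outgoing configuration used in Lemma \ref{lem:corner}, and that the cyclic indexing admits a genuine predecessor $v \neq u$ — which is guaranteed precisely because the cycle is multi-vertex. The type of the outgoing edge is irrelevant to the final contradiction; only the incoming edge matters, which keeps the case analysis short.
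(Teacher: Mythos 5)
Your proposal is correct and follows essentially the same route as the paper's proof: assume a multi-vertex cycle in $G_{A\oplus B}$, use Lemma \ref{lem:corner} to produce a vertex $u$ with a nonzero row in $C$, use the incoming cycle-edge at $u$ together with Lemma \ref{row-anni} to force that row to vanish, and conclude acyclicity. Your closing remark about the incoming versus outgoing edges at $u$ is exactly the point the paper's proof relies on (``on this cycle, $u$ also has an incoming edge''), and you resolve it the same way.
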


\begin{proof}
It suffices to prove that the digraph $G_{A\oplus B}$ is acyclic. Suppose $G_{A\oplus B}$ has a directed cycle. By 
Lemma \ref{lem:corner}, there is a vertex $u$ on the cycle such that row $u$ of $C$ is nonzero. But on this cycle, 
$u$ also has an incoming edge and therefore, column $u$ is nonzero in $A$ or in $B$. Lemma \ref{row-anni} then forces 
row $u$ of $C$ to be zero, a contradiction. Hence $G_{A\oplus B}$ is acyclic. Finally, a topological ordering of 
$G_{A\oplus B}$ triangularizes both $A$ and $B$.
\end{proof}

The following corollary is immediate.

\begin{corollary}
Let $A,B\in M_n(\mathbb{R}_+)$ be nilpotent such that both $A$ and $B$ annihilate $AB$ and $BA$. Then 
$A$ and $B$ are simultaneously triangularizable.
\end{corollary}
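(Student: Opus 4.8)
The plan is to read this as an immediate consequence of Theorem \ref{thm:simtri}, whose hypothesis is exactly that $A$ and $B$ are nilpotent and satisfy $AC = BC = 0$, where $C = [A,B]_{\oplus} = AB \oplus BA$. Nilpotency of $A$ and $B$ is assumed outright, so the only thing I need to verify is that the annihilation conditions of the corollary — namely that $A$ and $B$ each annihilate both $AB$ and $BA$, i.e. $A(AB) = A(BA) = 0$ and $B(AB) = B(BA) = 0$ — force the two equalities $AC = 0$ and $BC = 0$ demanded by that theorem.

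First I would record the one algebraic fact that makes this work: over the max algebra, matrix multiplication distributes over $\oplus$, that is, $X(Y \oplus Z) = XY \oplus XZ$ for all $X, Y, Z \in M_n(\mathbb{R}_+)$. This is checked entrywise, since $[X(Y \oplus Z)]_{ij} = \max_k x_{ik}\max\{y_{kj}, z_{kj}\} = \max\{\max_k x_{ik}y_{kj},\ \max_k x_{ik}z_{kj}\} = (XY)_{ij} \oplus (XZ)_{ij}$. With distributivity available, the computation is direct: $AC = A(AB \oplus BA) = A(AB) \oplus A(BA) = 0 \oplus 0 = 0$, and identically $BC = B(AB) \oplus B(BA) = 0 \oplus 0 = 0$. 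Both annihilation hypotheses of Theorem \ref{thm:simtri} are thereby met, and since $A$ and $B$ are nilpotent, that theorem delivers their simultaneous triangularizability.

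I do not expect a genuine obstacle here, as the whole content of the corollary is the observation that annihilating $AB$ and $BA$ separately is the same as annihilating their max-sum $C$, which is precisely what distributivity over $\oplus$ supplies. The only step I would take care to state explicitly, rather than assume, is that the max product truly distributes over the max sum; since both operations are tropical (idempotent) rather than the usual ring operations, it is worth confirming this splitting of $AC$ into $A(AB) \oplus A(BA)$ before invoking the hypothesis.
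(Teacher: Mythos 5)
Your proposal is correct and is exactly the argument the paper intends: the corollary appears with no written proof (the paper calls it ``immediate''), and the intended content is precisely your observation that $A(AB)=A(BA)=0$ and $B(AB)=B(BA)=0$ combine via distributivity of $\otimes$ over $\oplus$ to give $AC=BC=0$, after which Theorem \ref{thm:simtri} applies. Your care in checking the entrywise distributivity identity is a nice touch but does not change the route.
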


We now bring out simultaneous triangularization of projector matrices.

\begin{definition}\label{projector}
A matrix $A \in M_n(\mathbb{R}_+)$ is a \textit{projector or idempotent}, if $A^2 = A$.
\end{definition}

We prove below that for two triangularizable projector matrices $A$ and $B$, nilpotency of the max-commutator implies 
that $A$ and $B$ are simultaneously triangularizable. However, the converse fails.

\begin{theorem}\label{projector simul. triangle}
Let $A, B \in M_n(\mathbb{R}_{+})$ be triangularizable projector matrices. If the max-commutator $C=[A,B]_{\oplus}$ is nilpotent, then $A$ and $B$ are simultaneously triangularizable.
\end{theorem}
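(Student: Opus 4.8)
The plan is to argue by contraposition through Theorem \ref{first result}. Since $A$ and $B$ are already assumed triangularizable, that theorem reduces the goal to showing that the union digraph $G_{A\oplus B} = G_A \cup G_B$ has no directed multi-vertex cycle. I would therefore assume, toward a contradiction, that $G_{A\oplus B}$ contains such a cycle and manufacture from it a directed cycle (possibly a self-loop) in $G_C$. Because the digraph of a nilpotent matrix must be acyclic --- a cycle, or even a single positive diagonal entry, produces positive closed walks of every length, so no power of the matrix can vanish --- such a cycle in $G_C$ would contradict the nilpotency of $C = AB \oplus BA$.

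First I would fix a simple multi-vertex cycle $v_0 \to v_1 \to \cdots \to v_{\ell-1} \to v_0$ in $G_{A\oplus B}$ and label each of its edges by $A$ or $B$ according to which matrix witnesses it (at least one does, by Observation \ref{useful observation}). Because $G_A$ and $G_B$ are each acyclic except for self-loops (Theorem \ref{traingle}), neither matrix can supply the entire cycle, so both labels occur. The crucial step is to exploit idempotency: if $p \to r$ and $r \to q$ are both $A$-edges, then $(A^2)_{pq} \ge a_{pr}\, a_{rq} > 0$, and $A^2 = A$ forces $a_{pq} > 0$, i.e.\ there is a single $A$-edge $p \to q$; the analogous statement holds for $B$ since $B^2 = B$. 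Iterating this collapses each maximal run of same-label edges into a single edge of that label. The junction vertices between consecutive runs form a subsequence of the distinct vertices $v_0,\dots,v_{\ell-1}$, hence remain distinct; and since cyclically adjacent runs carry opposite labels and there are only two labels, the number of runs is necessarily even, say $2s$ with $s \ge 1$. This yields a strictly alternating cycle $w_0 \xrightarrow{A} w_1 \xrightarrow{B} w_2 \xrightarrow{A} \cdots \xrightarrow{B} w_{2s-1} \to w_0$ on distinct vertices.

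Finally I would pair consecutive edges: each block $w_{2j} \xrightarrow{A} w_{2j+1} \xrightarrow{B} w_{2j+2}$ gives $(AB)_{w_{2j}\,w_{2j+2}} \ge a_{w_{2j} w_{2j+1}}\, b_{w_{2j+1} w_{2j+2}} > 0$, hence $c_{w_{2j}\,w_{2j+2}} > 0$, which is a $C$-edge $w_{2j} \to w_{2j+2}$. These $C$-edges close up into a directed cycle $w_0 \to w_2 \to \cdots \to w_{2s-2} \to w_0$ in $G_C$. If $s \ge 2$ this is a genuine multi-vertex cycle; if $s = 1$ it degenerates to the self-loop $w_0 \to w_0$, that is, $c_{w_0 w_0} > 0$. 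In either case $G_C$ carries a directed cycle, so $C$ is not nilpotent, which is the desired contradiction; therefore $G_{A\oplus B}$ is acyclic and $A, B$ are simultaneously triangularizable.

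I expect the main obstacle to be the non-alternating case, since a generic cycle in $G_{A\oplus B}$ need not alternate between $A$- and $B$-edges; this is precisely where the projector hypotheses $A^2 = A$ and $B^2 = B$ are indispensable, and the run-collapsing argument must be arranged so that the reduced cycle still lies on distinct vertices and has an even number of alternating blocks. The secondary subtlety is the degenerate case $s = 1$, where the construction outputs a loop rather than a multi-vertex cycle in $G_C$; one must observe that a positive diagonal entry of $C$ already contradicts nilpotency, so this case causes no difficulty.
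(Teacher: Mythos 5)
Your proof is correct and takes essentially the same route as the paper's: exploit idempotency ($a_{pq}\ \geq\ a_{pr}a_{rq}$ from $A^2=A$) to collapse each maximal same-label run of a putative cycle in $G_{A\oplus B}$ into a single edge, obtain an alternating cycle, and pair consecutive $A$- and $B$-edges into edges of $G_C$, contradicting the nilpotency of $C$. You are in fact slightly more careful than the paper, which does not explicitly address the degenerate two-run case where the resulting ``cycle'' in $G_C$ is only a self-loop (a positive diagonal entry), though as you note this equally contradicts nilpotency.
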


\begin{proof}
The proof relies on the crucial fact that the digraphs of projector matrices have a transitivity property: 
if there exists an edge from $i \to k$ and $k \to j$, then there exists an edge from $i \to j$ in $G_A$, as 
$a_{ij} \geq a_{ik}a_{kj}$.  Moreover, if there exists an edge $i \to k$ in $G_A$ and $k \to j$ in $G_B$, then there exists 
an edge $i \to j$ in $G_{AB}$.
	
Let us now assume that the max-commutator $C$ is nilpotent so that $G_C$ is acyclic. To prove $A$ and $B$ are 
simultaneously triangularizable, it suffices to prove that $G_{A \oplus B}$ is acyclic. Assume on the contrary that 
$G_{A \oplus B}$ has a cycle of length atleast $2$, say, $v_1 \to v_2 \to \cdots \to v_m \to v_1$. 
Since $A$ and $B$ are individually triangularizable, $G_A$ and $G_B$ have no directed cycles of length $\geq 2$.  
Therefore, in this cycle, there must be atleast one edge from $G_A$ and atleast one from $G_B$. Now, partition the cycle 
into maximal runs of consecutive edges belonging to the same graph, say, 
\begin{align*}
&v_1 \xrightarrow{A} v_2 \xrightarrow{A} \cdots \xrightarrow{A} v_{k_1}\\
&v_{k_1} \xrightarrow{B} v_{k_1 +1} \xrightarrow{B} \cdots \xrightarrow{B}v_{k_2}\\
&\vdots\\
&v_{k_l} \xrightarrow{A / B} v_{k_l +1} \xrightarrow{A/ B} \cdots \xrightarrow{A / B} v_{k_{l+1}} = v_1.
\end{align*}
	
Then, from transitive property of $G_A$ and $G_B$ in the above blocks, we get 
$v_1 \xrightarrow{A} v_{k_1}$, $v_{k_1} \xrightarrow{B} v_{k_2}$ and so on. Thus, we get an alternating cycle in 
$G_{A \oplus B}$ in which each of the consecutive edges comes from $G_A$ or $G_B$ alternatively. Let us assume without 
loss of generality that the cycle is the following: 
\[v_1 \xrightarrow{A} v_{k_1} \xrightarrow{B} v_{k_2} \xrightarrow{A}  v_{k_3} \xrightarrow{B} \cdots \xrightarrow{A} v_1.\] 
As explained in the very first step that when $G_A$ has an edge $i \to k$ and $G_B$ has an edge $k \to j$,
then there is an edge $i \to j$ in $G_{AB}$; from this we get a cycle in $C = [A, B] _{\oplus}$ 
as follows: 
\[v_1 \xrightarrow{C} v_{k_2} \xrightarrow{C} v_{k_4}  \xrightarrow{C} \cdots \xrightarrow{C} v_1.\] 
This contradicts the initial assumption that $C$ is nilpotent. Thus, $G_{A \oplus B}$ is acyclic, thereby proving that 
$A$ and $B$ are simultaneously triangularizable.
\end{proof}

The following example illustrates that the converse of the above theorem fails to hold.

\begin{example}\label{counterex-projector thm}
Let $A = \begin{pmatrix}
	             1 & 1 & 0\\
	             0 & 1 & 0\\
	             0 & 0 & 0
               \end{pmatrix}, \  \text{and} \ B = \begin{pmatrix}
               																		1 & 0 & 1\\
               																		0 & 0 & 0\\
               																		0 & 0 & 1
                                                                                   \end{pmatrix}$. $A$ and $B$ are projectors, triangular and also 
simultaneously triangularizable as $G_{A \oplus B} = G_A \cup G_B$ is acyclic. However, the max-commutator 
$[A,B]_{\oplus} = \begin{pmatrix}
	                                1 & 1 & 1\\
	                                0 & 0 & 0\\
	                                0 & 0 & 0
                                  \end{pmatrix}$, which is not nilpotent.
\end{example}

Let us now see how the commutant of $A$ and $B$ relates to simultaneous triangularization of these matrices. Let 
$V = \{v_1, \dots, v_k\} \subset \mathbb{R}^n_+$. The max-linear cone spanned by the vectors in $V$ is defined as 
$$\langle v_1, \dots, v_k \rangle:= \displaystyle \Biggl\{\bigoplus_{i=1}^{k} \lambda_i v_i : \lambda_i \geq 0 \Biggr\}.$$ 
We shall denote the max-linear cone spanned by $k$ vectors by $U_k$. 

\begin{definition}\label{unicellular}
A matrix $A \in M_n(\mathbb{R}_+)$ is called \textit{unicellular} if it has a unique triangularizing chain. 
\end{definition}

Therefore, there is a unique permutation $\pi$ of $\{1, 2, \dots, n\}$ such that the max-linear spanning cone of the 
vectors $e_{\pi(i)}, \ U_i = \langle e_{\pi(1)}, e_{\pi(2)}, \cdots, e_{\pi(k)}\rangle$ forms a unique chain of invariant cones 
for $A$: $\{0\} \subset U_1 \subset U_2 \subset \cdots \subset U_n =\mathbb{R}_+^{n}$. This is equivalent to saying that 
the corresponding digraph $G_A$ has a unique topological ordering of its vertices $1, 2, \cdots, n$. The following 
example illustrates this.

\begin{example}\label{unicellular-example}
Let $A =\begin{pmatrix}
        1&0&7&0\\
        0&1&0&5\\
        0&0&1&0\\
        6&0&0&1
    \end{pmatrix}$. Then, $G_A$ has only one directed path (except self-loops), which is given by 
$2 \rightarrow 4 \rightarrow 1 \rightarrow 3$. Since there are no other paths and branchings, there is a unique 
topological order for the vertices given by $2 \preceq 4 \preceq 1\preceq 3$. Thus, the support of the unique  
triangularizing chain is the following:
$$\emptyset \subset \{2\} \subset \{2, 4\} \subset \{2, 4, 1\} \subset \{2, 4,1,3\} .$$
We can then take the permutation $\pi = (1, 2, 4,3)$ and the corresponding permutation matrix $P$ and compute 
$P^{-1}AP =  \begin{pmatrix}
        1&5&0&0\\
        0&1&6&0\\
        0&0&1&7\\
        0&0&0&1
\end{pmatrix}$ to be upper triangular.
\end{example}

\begin{definition}\label{commutant}
The commutant of a pair of matrices $\{A,B\}$, denoted by, is the set  $\{A,B\}': = \{X:  AX = XA,\ BX = XB\}$.
\end{definition}

Notice that the above definition is very similar to the classical one, except that matrix multiplication is in the max sense. 
The final theorem of this section is the following.

\begin{theorem}\label{commutant-simultaneous triangularization}
Let $A, B \in M_n(\mathbb{R}_+)$ be simultaneously triangularizable. If at least one of them (say $A$) is unicellular. 
Then the family $\{A,B\}  \cup \{A,B\}'$ is simultaneously triangularizable. 
\end{theorem}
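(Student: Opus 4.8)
The plan is to exploit the unique topological ordering guaranteed by the unicellularity of $A$ together with the characterization of simultaneous triangularizability from Theorem \ref{first result}. Since $A$ is unicellular, its digraph $G_A$ admits exactly one topological ordering $\pi$, and after conjugating by the corresponding permutation matrix $P$ we may assume (relabeling) that $A$, $B$, and every commuting matrix are being examined in the coordinates where $A$ is already upper triangular with this unique order; because $A$ and $B$ are simultaneously triangularizable, $B$ is simultaneously put into upper triangular form by the same $P$. Thus it suffices to show that every $X \in \{A,B\}'$ is upper triangular in these coordinates, for then $G_{A \oplus B \oplus X}$ inherits the acyclicity of $G_{A\oplus B}$ and an appeal to the (multi-matrix version of) Theorem \ref{first result} finishes the argument.

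First I would reduce to the single constraint $AX = XA$, discarding the condition $BX = XB$ since unicellularity of $A$ alone should pin down the invariant structure of any commuting $X$. The heart of the matter is a max-algebraic analogue of the classical fact that a matrix commuting with a unicellular (single chain) operator preserves that unique invariant chain. Concretely, I would argue that if $X$ commutes with $A$, then $X$ maps each invariant cone $U_k = \langle e_{\pi(1)},\dots,e_{\pi(k)}\rangle$ into itself. The mechanism is graph-theoretic: from $AX = XA$ one reads off, for each pair of vertices, a balance between the max-weighted walks $i \xrightarrow{A} \cdot \xrightarrow{X} j$ and $i \xrightarrow{X} \cdot \xrightarrow{A} j$. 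Using that $G_A$ has a unique topological order, I would show that no edge of $G_X$ can run ``backwards'' against that order; an edge $i \to j$ in $G_X$ with $\pi(j) < \pi(i)$ would, when composed with a forward $A$-path realizing the chain, create a directed multi-vertex cycle in $G_{A \oplus X}$, contradicting that $X$ respects the unique $A$-invariant chain.

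More precisely, the key step is: for any $X$ with $AX=XA$, the digraph $G_X$ is acyclic and compatible with $\pi$, i.e.\ every edge $u \to w$ of $G_X$ satisfies $\pi(u) \le \pi(w)$. I would establish this by contradiction, assuming a backward edge in $G_X$ and tracing how the commutation identity $(AX)_{ij} = (XA)_{ij}$ propagates weights along the unique chain, forcing a vertex that both receives and sends, hence producing a cycle violating uniqueness of the topological order (equivalently, violating the hypothesis that $A$ is unicellular rather than merely triangularizable). Once each such $X$ is shown to be upper triangular in the $\pi$-coordinates, together $B$ and all of $\{A,B\}'$ have digraphs that are subgraphs of the acyclic $G_{A\oplus B}$-order, so the union over the whole family has no directed multi-vertex cycle, and Theorem \ref{first result} (applied to the family) yields simultaneous triangularizability of $\{A,B\} \cup \{A,B\}'$.

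The main obstacle I anticipate lies precisely in the key step, namely turning the commutation relation $AX = XA$ into rigid combinatorial control over the edges of $G_X$. In the classical setting this is linear-algebraic (shared eigenvectors, invariant subspaces), but over the max algebra ``invariance of the cone $U_k$'' must be extracted purely from the max-plus identities, and the subtlety is that max-algebraic commutation is much weaker than its classical counterpart: equality of two maxima does not force equality term by term. I would therefore need to argue carefully that \emph{some} witnessing walk on each side forces the offending backward edge to complete a cycle, and that the \emph{uniqueness} of the topological order (not mere acyclicity) is exactly what rules out the alternative witnesses. Handling the possibility that $X$ is not itself triangularizable a priori — so that Theorem \ref{first result} does not directly apply to the pair $\{A,X\}$ until acyclicity of $G_X$ is separately secured — is the delicate point that the proof must address head-on.
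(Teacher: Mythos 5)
Your overall skeleton matches the paper's: use the unique topological ordering $\pi$ coming from unicellularity of $A$, conjugate by the corresponding permutation matrix so that $A$ and $B$ become upper triangular, and reduce everything to showing that each $X$ with $AX=XA$ is upper triangular in the same coordinates. The divergence is in how that last step is justified, and this is where your proposal has a genuine gap. Your key claim --- that $AX=XA$ together with uniqueness of the topological order of $G_A$ forces every edge $u\to w$ of $G_X$ to satisfy $\pi(u)\le\pi(w)$ --- is false as stated. Take
$A=\begin{pmatrix}1&1\\0&1\end{pmatrix}$ and $X=\begin{pmatrix}1&1\\1&1\end{pmatrix}$. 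Here $G_A$ has the unique topological order $1\prec 2$, so $A$ is unicellular in the sense of Definition \ref{unicellular} as glossed in the paper, yet $AX=XA=X$, while $G_X$ contains the backward edge $2\to 1$ (indeed the two-cycle $1\to 2\to 1$, so $X$ is not even triangularizable). Your mechanism is also circular at the critical moment: you say a backward edge of $G_X$ creates a multi-vertex cycle in $G_{A\oplus X}$, ``contradicting that $X$ respects the unique $A$-invariant chain,'' but that $X$ respects the chain is precisely what you are trying to establish; the identity $(AX)_{ij}=(XA)_{ij}$ only equates two maxima and, as the example shows, does not pin down individual terms when $a_{11}=a_{22}>0$.

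For comparison, the paper's proof takes a different route through invariant cones: from $AX=XA$ it deduces that $X(U_k)$ is an $A$-invariant cone and then invokes the assertion that, by unicellularity, the $U_k$ are the \emph{only} $A$-invariant subcones, whence $X(U_k)\subseteq U_k$ and $P^{-1}XP$ is upper triangular. That assertion requires strictly more than ``unique topological ordering'' (in the example above $\langle(1,1)^{T}\rangle$ is $A$-invariant but is not a coordinate cone), so the step you flagged as the main obstacle is genuinely the crux: neither your graph-theoretic plan nor a direct appeal to Theorem \ref{first result} closes it without a stronger hypothesis on $A$ (for instance, that the coordinate cones $U_k$ are the only $A$-invariant cones whatsoever). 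As written, your proposal does not supply a proof of the key step, and the specific combinatorial statement it rests on fails.
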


\begin{proof}
Since $A$ is unicellular, the digraph $G_A$ induces a total order on $\{1,\dots, n\}$ and hence $G_A$ admits a unique 
topological ordering $\pi$. Let $P$ be the permutation matrix corresponding to the permutation $\pi$. Then 
$\bar{A}:= P^{-1}AP$ is upper triangular; since $A$ and $B$ are simultaneously triangularizable, $\bar{B}:= P^{-1}BP$ 
is also upper triangular. For $k = 0, 1,\dots, n$, define $U_k = \langle e_{\pi(1)},\dots, e_{\pi(k)}\rangle$.
The $U_k$'s form the unique sequence of $A$--invariant max-cones:
$\{0\}=U_0 \subset U_1 \subset \cdots \subset U_n = \mathbb{R}_+^{n}$. Now let $X \in \{A,B\}'$, so that 
$XA=AX$ and $XB=BX$. From $XA=AX$ we infer that $X(U_k)$ is $A$--invariant for each $k$. By unicellularity of $A$, 
the only $A$--invariant subcones are the $U_k$ and hence $X(U_k)\subseteq U_k$ for all $k$. Thus, $U_k$ is an 
invariant max-cone of $X$ for each $k = 1, \dots, n$. We then have $P^{-1}XPe_k = P^{-1}Xe_{\pi(k)}$. Since 
$X(U_k) \subseteq U_k$, the vector $Xe_{\pi(k)}$ is supported on $\{\pi(1), \dots, \pi(k)\}$. Applying $P^{-1}$, we see 
that the support of $P^{-1}XPe_k$ is in the first coordinates. This makes all the entries of $P^{-1}XP$ below the diagonal 
to be zero, thereby making it upper triangular. Hence $\{A,B\}\cup\{A,B\}'$ is simultaneously triangularizable.
\end{proof}

\subsection{The characteristic polynomial of a pair and its linear factorization}\hspace*{0.5cm}

In this section, we define the notion of characteristic polynomial of a pair of 
matrices in terms of the tropical determinant and investigate when it can be factored into linear terms. 
This has an interesting connection with simultaneous triangularization. Notice that in the one variable case, the roots 
of the characteristic polynomial of a max-algebraic matrix $A$, are given by the intersection points of the curves (lines in 
case of max-plus algebra) corresponding to each monomial term in the polynomial $\text{tdet}(I \oplus zA)$. 
In that case, the largest root is the principal eigenvalue of $A$. This takes us to the following definitions when there are 
atleast two matrices. We shall make use of the following definitions, stated in order of preference. 
The first one is that of the tropical determinant.

\begin{definition}\label{t-det}
Let $A=(a_{ij})\in M_n(\mathbb{R}_+)$. The \textit{tropical determinant} of $A$ is defined to be 
$\text{tdet}(A) = \displaystyle \bigoplus_{\sigma\in S_n} \displaystyle \prod_{i=1}^{n} a_{i \sigma(i)}
= \max_{\sigma\in S_n}\ \displaystyle \prod_{i=1}^{n} a_{i \sigma(i)}$.
\end{definition}

We now define the characteristic polynomial of a pair of max-algebraic matrices.

\begin{definition}\label{char poly of a pair}
Let $A, B\in M_n(\mathbb{R}_+)$ and for $z = (z_1,z_2) \in \mathbb{R}^2$, set
$M(z):= I \ \oplus\ z_1A\ \oplus\ z_2B, \ z_1,z_2 \geq 0$. The \textit{characteristic polynomial} of $A$ and $B$ is 
defined to be: $P_{A,B}(z):=\text{tdet}\!\left(M(z)\right)$.
\end{definition}

The following two definitions are that of an eigenvalue pair of a pair of max-algebraic matrices and the associated 
eigenvector.

\begin{definition}\label{def:eigenpair-sing}
Let $A, B \in M_n(\mathbb{R}_+), M(z) = I\oplus z_1A\oplus z_2B$ and $P_{A,B}(z) = \text{tdet}(M(z))$.
A point $z = (z_1,z_2) \in \mathbb{R}_+^2$ is called an eigenvalue pair of $(A,B)$
if $M(z)$ is tropically singular; that is, if the maximum in $\text{tdet}(M(z))$ is attained by
at least two distinct permutations.
\end{definition}

\begin{definition}\label{def:eigenvector-kernel}
For an eigenvalue pair $z$, a vector $0 \neq v$ is called an eigenvector associated
with $z$ if for each row $i$, the maximum in
$(M(z)\otimes v)_i =  \displaystyle \max_{1 \leq j \leq n} \{m_{ij}(z) \,v_j\}$ is attained atleast twice.
\end{definition}

We illustrate the above definition by means of a nice application in contingency platforming under two operational support packages. The set up is the following: A station must assign three trains $T_1,T_2,T_3$ to three platforms $P_1,P_2,P_3$ (one-to-one) during a disruption window and the railway department has a punctuality target of departure within two 
minutes of the schedule. Let us assume that $p_i$ is the historical probability that $T_i$ meets this target when it stays on its scheduled platform. To compare this contingency reassignments on a common scale, it is natural to use the on-time probability ratio 
\[
r_{ij}=\frac{\Pr(T_i \text{ on-time on } P_j)}{\Pr(T_i \text{ on-time on its scheduled platform})}.
\] 
This means that the default assignment $T_i\mapsto P_i$ is normalized to $r_{ii}=1$, where $r_{ij}>1$
means that reassigning $T_i$ to $P_j$ improves on-time performance relative to the default. At the station, two operational support packages are available:

\begin{itemize}
\item \textbf{Support A (passenger guidance).}
$z_1 \in [0,1]$ is the fraction of the planned passenger-guidance package that is actually deployed:
$z_1=0$ means no extra staff or signs or announcements are used, and $z_1=1$ means that the fully planned package is used.
	
\item \textbf{Support B (dispatch priority).}
$z_2 \in [0,1]$ is the fraction of the planned dispatch-priority package that is actually available:
$z_2=0$ means that no priority can be granted, and $z_2=1$ means that fully planned priority capacity is available
to give selected trains precedence at route-setting or signal conflicts.
\end{itemize}

At full deployment ($z_1=z_2=1$), data for the present disruption indicates the following:

\[
p_1=0.60,\quad p_2=0.65,\quad p_3=0.55,
\]
and the following feasible re-routings with their measured ratios:
\[
T_1\to P_2:\ \frac{0.78}{0.60}=1.30,\qquad
T_2\to P_3:\ \frac{0.78}{0.65}\approx 1.20,\qquad
T_3\to P_1:\ \frac{0.77}{0.55}=1.40.
\] 

All other non-scheduled reassignments are deemed infeasible (or unacceptable) in this window and are assigned the 
ratio $0$. For each reassignment $T_i\to P_j$ ($i\neq j$), we decide in advance which support package is responsible for 
enabling that reassignment in the disruption window. If the reassignment is enabled by passenger-guidance actions 
(Support A), then its full-deployment on-time ratio is placed in $A$;  that is, $a_{ij} = r_{ij}$ and $b_{ij} = 0$. A 
similar definition works for $B$. All reassignments that are not feasible or allowed in the window are assigned ratio $0$ 
in both matrices.

In this disruption window, the reroutings $T_1\to P_2$ and $T_2\to P_3$ primarily requires managing
passenger movement across platforms and hence are enabled by passenger guidance and are recorded in $A$,
whereas the rerouting $T_3\to P_1$ primarily requires resolving an approach or route conflict by traffic control
 and hence is enabled by dispatch priority and is recorded in $B$.

We encode these full-deployment ratios by
\[
A=\begin{pmatrix}
	0 & 1.30 & 0\\
	0 & 0 & 1.20\\
	0 & 0 & 0
\end{pmatrix},
\qquad
B=\begin{pmatrix}
	0 & 0 & 0\\
	0 & 0 & 0\\
	1.40 & 0 & 0
\end{pmatrix}.
\]
For deployment levels $(z_1,z_2)$ the effective max-times assignment matrix is given by the pencil
\[
M(z)=I\oplus z_1A\oplus z_2B=
\begin{pmatrix}
	1 & 1.30z_1 & 0\\
	0 & 1 & 1.20z_1\\
	1.40z_2 & 0 & 1
\end{pmatrix}.
\]

For given deployment levels $(z_1,z_2)$, the entry $m_{ij}(z)$ is the best available on-time ratio for assigning 
$T_i$ to $P_j$ under the default plan (which is given by the identity matrix $I$) and the two support packages 
($z_1A$ and $z_2B$). The tropical determinant
\[
P_{A,B}(z_1,z_2)=\text{tdet}(M(z))
\]
is then the best station-wide ratio that can be achieved by a one-to-one
platform assignment. In this example, only two complete assignment plans can occur:
the default plan (identity permutation) with value $1$, and the rotation
$T_1\mapsto P_2,\ T_2\mapsto P_3,\ T_3\mapsto P_1$ with value
$(1.30z_1)(1.20z_1)(1.40z_2)=2.184\,z_1^2z_2$. Hence
\[
P_{A,B}(z_1,z_2)=\max\{\,1,\ 2.184\,z_1^2z_2\,\}.
\]

This gives us the operational decision immediately:

\[
\begin{cases}
	2.184\,z_1^2z_2<1 \quad\Rightarrow\quad \text{keep the scheduled platforming (default plan),}\\[2mm]
	2.184\,z_1^2z_2>1 \quad\Rightarrow\quad \text{use the rotated platforming plan.}
\end{cases}
\]
The curve
\[
1=2.184\,z_1^2z_2
\]

is exactly the boundary where the two plans are equally good. By Definition \ref{def:eigenpair-sing}, points on this curve 
are the eigenvalue pairs: they are the deployment levels at which $M(z)$ becomes tropically singular, because two
distinct permutations attain the same maximum in $\text{tdet}(M(z))$. Along an eigenvalue pair $z=(z_1,z_2)$, an 
associated eigenvector (Definition \ref{def:eigenvector-kernel}) gives a concrete certificate that the station is
exactly at the switching boundary: in each train-row, the best available choice is not unique.

For the matrix
\[
M(z)=
\begin{pmatrix}
	1 & 1.30z_1 & 0\\
	0 & 1 & 1.20z_1\\
	1.40z_2 & 0 & 1
\end{pmatrix},
\]
we say, $v= (v_1,v_2,v_3)^T$ is an eigenvector, if the \emph{max attained at least twice} condition in each row:
\[
1\cdot v_1=(1.30z_1)\,v_2,\qquad
1\cdot v_2=(1.20z_1)\,v_3,\qquad
(1.40z_2)\,v_1=1\cdot v_3.
\]
These three equalities hold simultaneously if and only if
\[
1=2.184\,z_1^2z_2,
\]
which is precisely on the eigenvalue-pair curve. For instance, choosing $v_3=1$ gives the explicit associated eigenvector
\[
v=
\begin{pmatrix}
	1.56\,z_1^2\\[1mm]
	1.20\,z_1\\[1mm]
	1
\end{pmatrix},
\qquad \text{valid whenever } \ 1=2.184\,z_1^2z_2.
\]
Operationally, the three tie equalities mean:
\begin{itemize}
\item for $T_1$, stay on $P_1$ and move to $P_2$ using Support A are equally good;
\item for $T_2$, stay on $P_2$ and move to $P_3$ using Support A are equally good;
\item for $T_3$, stay on $P_3$ and move to $P_1$ using Support B are equally good.
\end{itemize} 

Hence the eigenvector identifies the competing actions that are simultaneously optimal at the
boundary; moving slightly off the boundary makes one of the two assignment plans better.

The root set of $P_{A,B}(z)$ consists exactly of those parameter values $z=(z_1,z_2) \in \mathbb{R}_+^2$, 
for which the maximum in $\text{tdet}(M(z))$ is attained by atleast two distinct permutations; equivalently, $M(z)$ is 
tropically singular.  This is the natural two-parameter analogue of the one-variable characteristic max-polynomial 
of Cuninghame--Green, whose roots are often viewed as \emph{algebraic eigenvalues}. These have been studied in detail in  \cite{CuninghameGreen,Nishida-Watanabe}.  In this sense, Definitions \ref{def:eigenpair-sing} and \ref{def:eigenvector-kernel} describe \emph{algebraic singular pairs} $(z_1,z_2)$ and their associated tropical-kernel. This notion should not be confused with the \emph{two-sided generalized eigenproblem} $A\otimes x=\lambda\otimes B\otimes x$, whose spectrum 
$\sigma(A,B)$ may be a \emph{finite union of intervals} (and not merely a finite set) \cite{Gaubert-Sergeev}, and in fact 
any finite family of intervals (and isolated points) can occur as $\sigma(A,B)$ for a suitable pair $(A,B)$ \cite{Sergeev}. 
Thus, our eigenvalue pairs are determinantal or algebraic singularity parameters of the pencil
$M(z)$, whereas $\sigma(A,B)$ captures solvability of a two-sided eigenproblem and generally exhibits interval-valued 
behavior. In the two variable case, we observe that the roots of the characteristic polynomials are the intersection curves of the 
surfaces corresponding to the monomials in the polynomial $\text{tdet}(I \oplus z_1A \oplus z_2B)$. 
 
Our first result connecting simultaneous triangularization and linear factorization of the corresponding characteristic 
polynomial is the following.

\begin{theorem}\label{sim-triangular = linear factors}
Let $A, B \in M_n(\mathbb{R}_+)$ be simultaneously triangularizable. Then, their characteristic polynomial 
$P_{A,B}(z)$ is a product of $n-$linear factors. 
\end{theorem}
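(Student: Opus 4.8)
The plan is to reduce to the triangular case and then compute the tropical determinant explicitly. By hypothesis there is a $P \in GL_n(\mathbb{R}_+)$ with $\bar{A} := P^{-1}AP$ and $\bar{B} := P^{-1}BP$ both upper triangular. The first step is to transfer this to $M(z)$. Since max-times matrix multiplication distributes over the entrywise maximum and commutes with multiplication by the nonnegative scalars $z_1, z_2$, and since $P^{-1}IP = I$, conjugation by $P$ passes through the defining expression for $M(z)$ to give
\[
P^{-1}M(z)P = I \oplus (z_1\bar{A}) \oplus (z_2\bar{B}).
\]
Each summand on the right is upper triangular, so $P^{-1}M(z)P$ is upper triangular as well.

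The second step is to observe that $\mathrm{tdet}$ is invariant under conjugation by any $P \in GL_n(\mathbb{R}_+)$. Writing such a $P$ as $D\Pi$ with $D$ a positive diagonal matrix and $\Pi$ a permutation matrix, the permutation conjugation $X \mapsto \Pi^{-1}X\Pi$ merely relabels indices and so induces a bijection of $S_n$ in the defining maximum, leaving it unchanged; the diagonal conjugation $X \mapsto D^{-1}XD$ replaces each $a_{i\sigma(i)}$ by $d_i^{-1}a_{i\sigma(i)}d_{\sigma(i)}$, and the scalar factor $\prod_i d_i^{-1}d_{\sigma(i)}$ equals $1$ because $\sigma$ permutes the indices. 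Composing, I obtain $\mathrm{tdet}(P^{-1}M(z)P) = \mathrm{tdet}(M(z)) = P_{A,B}(z)$.

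The final step is to compute the tropical determinant of an upper triangular matrix $T$. For any $\sigma \neq \mathrm{id}$ there is an index $i$ with $\sigma(i) < i$, whence $t_{i\sigma(i)} = 0$ and the corresponding product vanishes; thus the maximum is attained at $\sigma = \mathrm{id}$ and $\mathrm{tdet}(T) = \prod_{i=1}^n t_{ii}$. Applying this to $T = P^{-1}M(z)P$, whose $i$th diagonal entry is $\max\{1,\, z_1\bar{a}_{ii},\, z_2\bar{b}_{ii}\}$, yields
\[
P_{A,B}(z) = \prod_{i=1}^n \bigl(1 \oplus z_1\bar{a}_{ii} \oplus z_2\bar{b}_{ii}\bigr),
\]
a product of $n$ linear factors. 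I expect the only delicate point to be the two invariance claims of the first two steps, namely that conjugation commutes with $\oplus$ and with scalar multiplication and that the diagonal scalings cancel; once these are in place, the triangular computation is entirely routine.
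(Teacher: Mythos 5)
Your proof is correct and follows essentially the same route as the paper: conjugate $M(z)$ into upper triangular form and observe that only the identity permutation contributes to the tropical determinant, yielding the product of the diagonal entries $1 \oplus z_1\bar{a}_{ii} \oplus z_2\bar{b}_{ii}$. Your argument is in fact slightly more careful than the paper's, since you explicitly verify that $\mathrm{tdet}$ is invariant under conjugation by a general element $D\Pi$ of $GL_n(\mathbb{R}_+)$ (the diagonal factors cancelling over any permutation), a step the paper leaves implicit by working only with a permutation matrix.
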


\begin{proof}
If $A,B$ are simultaneously triangularizable, there is a permutation matrix 
$P \in GL_n(\mathbb{R}_+)$, such that $A^{\prime} = P^{-1}AP,\ \& \  B^{\prime} = P^{-1}BP$ 
are upper triangular. Observe that $M'(z) = I \oplus z_1 A' \oplus z_2 B'$ is also upper triangular. Then 
the tropical determinant of $M^{\prime} (z)$ given by $\text{tdet}(M'(z)) = 
\displaystyle \prod_{i=1}^{n} \bigl(1\oplus a'_{ii}z_1\oplus b'_{ii}z_2\bigr)$ (note that for any 
other permutation $\sigma \neq id$, atleast one of $a'_{i\sigma(i)}\oplus b'_{i\sigma(i)} = 0$, 
thereby making the entire product zero. 
\end{proof}

The converse of the above theorem is not true in the max-algebraic setting, as the following example 
illustrates. 

\begin{example}\label{example-2.1}
	Let
	$A=\begin{pmatrix}
		2 & 2 & 0 & 0\\
		0 & 3 & 2 & 0\\
		0 & 0 & 4 & 2\\
		0 & 0 & 0 & 5
	\end{pmatrix}
	\quad \text{and} \quad
	B=\begin{pmatrix}
		4 & 0 & 0 & 0\\
		0 & 5 & 0 & 0\\
		0 & 0 & 6 & 0\\
		2 & 0 & 0 & 7
	\end{pmatrix}$.
	It is clear that $A$ and $B$ are not simultaneously triangularizable. Indeed, $G_A$ contains the edges
	$1\to 2$, $2\to 3$, $3\to 4$, while $G_B$ contains the edge $4\to 1$. Hence the union digraph
	$G_{A\oplus B}=G_A\cup G_B$ contains the directed cycle $1\to 2\to 3\to 4\to 1$.
	
	We then have
	\[
	M(z)=I\oplus (z_1A)\oplus (z_2B)=
	\begin{pmatrix}
		1\oplus 2z_1\oplus 4z_2 & 2z_1 & 0 & 0\\
		0 & 1\oplus 3z_1\oplus 5z_2 & 2z_1 & 0\\
		0 & 0 & 1\oplus 4z_1\oplus 6z_2 & 2z_1\\
		2z_2 & 0 & 0 & 1\oplus 5z_1\oplus 7z_2
	\end{pmatrix}.	\] 
	Since the only nonzero off-diagonal entries of $M(z)$ are $m_{12},m_{23},m_{34},m_{41}$, the tropical
	determinant is attained either by the identity permutation or by the $4$--cycle $\sigma=(1\,2\,3\,4)$. Therefore,
	\begin{align*}
		\text{tdet}(M(z)) &=
		\max\!\left(
		\prod_{i=1}^4 (1\oplus a_{ii}z_1\oplus b_{ii}z_2),\ 2z_1\cdot 2z_1 \cdot 2z_1 \cdot 2z_2
		\right)\\
		&=
		\max\!\left(
		\prod_{i=1}^4 (1\oplus a_{ii}z_1\oplus b_{ii}z_2),\ 16\,z_1^{3}z_2
		\right).
	\end{align*} But
	\[
	\prod_{i=1}^4 (1\oplus a_{ii}z_1\oplus b_{ii}z_2)
	\ \ge\
	2z_1 \cdot 3z_1\cdot 4z_1\cdot 7z_2
	=168\,z_1^{3}z_2
	\ \ge\
	16\,z_1^{3}z_2,
	\]
and hence $\text{tdet}(M(z))=\prod_{i=1}^4 (1\oplus a_{ii}z_1\oplus b_{ii}z_2)$ for all $z_1,z_2\ge 0$.
Thus $P_{A,B}(z)=\text{tdet}(M(z))$ is a product of linear factors even though $A$ and $B$ are not simultaneously triangularizable.
\end{example}

We now provide a necessary and sufficient condition for the characteristic polynomial to factor into 
linear terms.

\begin{theorem}\label{nasc-linear factors}
Let $A, B\in M_n(\mathbb{R}_+)$ be triangularizable matrices. Then the following statements are equivalent:
\begin{enumerate}
\item The tropical determinant of $A\oplus B$ comes from the identity permutation; that is, 
$\text{tdet} (A \oplus B) = \displaystyle \prod_{i=1}^{n} (a_{ii} \oplus b_{ii})$.
\item $P_{A,B}(z)$ is a product of linear factors; that is, 
$P_{A,B}(z) = \displaystyle \prod_{i=1}^{n} \left(1\ \oplus\ \alpha_i z_1\ \oplus\ \beta_i z_2\right)$ 
for some $\alpha_i, \beta_i \geq 0$.
\end{enumerate}
\end{theorem}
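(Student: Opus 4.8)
The plan is to route everything through the identity‑permutation term. Writing $M(z)=I\oplus z_1A\oplus z_2B$, the diagonal entries are $M(z)_{ii}=1\oplus a_{ii}z_1\oplus b_{ii}z_2$ and the off‑diagonal entries are $M(z)_{ij}=a_{ij}z_1\oplus b_{ij}z_2$, so the identity permutation already contributes the candidate factorization
\[ D(z):=\prod_{i=1}^{n}\bigl(1\oplus a_{ii}z_1\oplus b_{ii}z_2\bigr), \]
and, being one competitor in the tropical determinant, it gives $P_{A,B}(z)\ge D(z)$ for all $z$. I would record two facts first. Since $A$ is triangularizable, $G_A$ has no directed multi‑vertex cycle (Theorem \ref{traingle}), so any non‑identity permutation of nonzero weight would index such a cycle; hence the tropical determinant collapses to the diagonal, $\text{tdet}(A)=\prod_i a_{ii}$, and likewise $\text{tdet}(B)=\prod_i b_{ii}$. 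The same acyclicity, applied to $I\oplus z_1A$, gives the axis identity $P_{A,B}(z_1,0)=\text{tdet}(I\oplus z_1A)=\prod_i(1\oplus a_{ii}z_1)$, and symmetrically at $z_1=0$.

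For $(1)\Rightarrow(2)$ I would prove the sharper statement $P_{A,B}(z)=D(z)$ and then read off $\alpha_i=a_{ii}$, $\beta_i=b_{ii}$. It suffices to dominate each permutation term by $D$. Fix $\sigma\neq\text{id}$ with non‑fixed set $T$; for $z_1,z_2>0$ every diagonal factor is $\ge 1$, so the factors over the fixed points cancel, and because $D$ dominates its own top‑degree part $\prod_{i\in T}(a_{ii}z_1\oplus b_{ii}z_2)$, the desired bound $\prod_{i\in T}M(z)_{i\sigma(i)}\le D(z)$ reduces to the homogeneous inequality $\prod_{i\in T}(a_{i\sigma(i)}z_1\oplus b_{i\sigma(i)}z_2)\le\prod_{i\in T}(a_{ii}z_1\oplus b_{ii}z_2)$. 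Matching coefficients monomial by monomial, this is exactly the requirement that for every mixing level $p$,
\[ \max_{\substack{S\subseteq T\\ |S|=p}}\ \prod_{i\in S}a_{i\sigma(i)}\prod_{i\in T\setminus S}b_{i\sigma(i)}\ \le\ \max_{\substack{S\subseteq T\\ |S|=p}}\ \prod_{i\in S}a_{ii}\prod_{i\in T\setminus S}b_{ii}. \]

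This level‑preserving inequality is the crux, and I expect it to be the main obstacle. Hypothesis (1) says precisely $\prod_i(a_{i\sigma(i)}\oplus b_{i\sigma(i)})\le\prod_i(a_{ii}\oplus b_{ii})$; once expanded it bounds the permuted product only by $\prod_{i\in T}(a_{ii}\oplus b_{ii})$, i.e.\ by the \emph{maximum over all levels $p$} of the right‑hand quantities, and not by the single level‑$p$ term that appears above. Upgrading this aggregate bound to a level‑by‑level bound is where the real combinatorial work lies: one must use that $\sigma|_T$ is a union of cycles of length $\ge 2$, none of which can sit entirely inside $G_A$ or entirely inside $G_B$, so along each cycle at least one edge is forced onto the $b z_2$ side and at least one onto the $a z_1$ side. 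Converting this forced alternation into a level‑preserving matching $S\mapsto S'$ of the diagonal monomials is the delicate point, and it is exactly here that the statement is most fragile, since an alternating cycle can manufacture a large mixed monomial (of type $z_1^p z_2^{\,|T|-p}$) whose coefficient one must certify does not pierce the diagonal envelope.

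For the converse $(2)\Rightarrow(1)$ I would first pin the factors down. Restricting the assumed identity $P_{A,B}(z)=\prod_i(1\oplus\alpha_iz_1\oplus\beta_iz_2)$ to $z_2=0$ and comparing with the axis identity above gives $\prod_i(1\oplus\alpha_iz_1)=\prod_i(1\oplus a_{ii}z_1)$ as functions; the breakpoints of a max‑product of such binomials are the reciprocals $1/\alpha_i$, so equality forces the multiset identity $\{\alpha_i\}=\{a_{ii}\}$, and symmetrically $\{\beta_i\}=\{b_{ii}\}$. Finally, setting $z_1=z_2=t$ and letting $t\to\infty$ extracts the top‑degree coefficient of each side: the left side produces $\text{tdet}(A\oplus B)=\max_\sigma\prod_i(a_{i\sigma(i)}\oplus b_{i\sigma(i)})$, the right side produces $\prod_i(\alpha_i\oplus\beta_i)$. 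Since $P_{A,B}\ge D$ already forces $\text{tdet}(A\oplus B)\ge\prod_i(a_{ii}\oplus b_{ii})$, the remaining task is to show that the pairing of the diagonal multisets realized by the factorization cannot exceed the identity pairing, which yields $\text{tdet}(A\oplus B)=\prod_i(a_{ii}\oplus b_{ii})$ and hence (1).
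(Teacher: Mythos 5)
Your reduction of $(1)\Rightarrow(2)$ is set up cleanly, but it stops exactly at the point that matters: the ``level-preserving inequality'' is announced as the crux and then left unproved, so the proposal does not establish the theorem. Your suspicion that this is where the statement is fragile is in fact justified: the level-by-level bound, and with it the implication $(1)\Rightarrow(2)$ as stated, fails. Take
\[ A=\begin{pmatrix}10&5\\0&10\end{pmatrix},\qquad B=\begin{pmatrix}0&0\\5&0\end{pmatrix}. \]
Both are triangularizable, and $\text{tdet}(A\oplus B)=\max\{10\cdot10,\,5\cdot5\}=100=(a_{11}\oplus b_{11})(a_{22}\oplus b_{22})$, so hypothesis $(1)$ holds. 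Yet
\[ P_{A,B}(z)=\max\bigl\{(1\oplus 10z_1)^2,\ 25\,z_1z_2\bigr\}, \]
which is not a product of linear factors: restricting to $z_1=0$ gives $P_{A,B}\equiv 1$, forcing $\beta_1=\beta_2=0$ in any putative factorization $\prod_i(1\oplus\alpha_iz_1\oplus\beta_iz_2)$, after which the factorization is a bounded function of $z_2$ while $P_{A,B}(1,z_2)=\max\{100,25z_2\}$ is not. This is exactly your level-$p=1$ failure: hypothesis $(1)$ bounds $a_{12}b_{21}=25$ only by the aggregate $\max\{a_{11}a_{22},a_{11}b_{22},b_{11}a_{22},b_{11}b_{22}\}=100$, not by the mixed level $\max\{a_{11}b_{22},b_{11}a_{22}\}=0$, and no combinatorial matching can repair that.

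It is worth noting that the paper's own proof silently crosses the same gap: it derives $\prod_{i\le r}(z_1a_{i\sigma(i)}\oplus z_2b_{i\sigma(i)})\ \ge\ \prod_{i\le r}(z_1a_{ii}\oplus z_2b_{ii})$ at whatever point $z$ the non-identity permutation attains the tropical determinant, and then evaluates the resulting ratio on the ray $z_1=z_2=t$, where that inequality need not hold (in the example above, $\sigma=(12)$ wins only when $z_2\gg z_1$, and on the diagonal ray the ratio is $25/100<1$). So your proposal, while honest about what it has not proved, leaves the decisive step open in $(1)\Rightarrow(2)$ --- and that step is false without a stronger hypothesis controlling each mixed monomial level separately. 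Your $(2)\Rightarrow(1)$ sketch likewise ends with an unproved ``remaining task'' (that the pairing of the multisets $\{a_{ii}\}$ and $\{b_{ii}\}$ realized by the factorization cannot beat the identity pairing), which would also have to be supplied before that direction is complete.
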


\begin{proof}
$(1) \Rightarrow(2)$: Assume that $\text{tdet} (A \oplus B) = 
\displaystyle \prod_{i=1}^{n} (a_{ii} \oplus b_{ii})$. We prove that the tropical determinant of 
$M(z)$ also comes from the identity permutation. Assume on the contrary that, it comes from a 
non-identity permutation $\sigma$. Let us assume without loss of generality that 
$\sigma = (i_1,i_2, \cdots, i_r)$ for some $r \geq 2$ and that $i_{r+1}, \cdots, i_n$ are fixed points of $\sigma$. Then, 

\begin{align*}    
P_{A,B}(z) & = \displaystyle \prod_{i=1}^{n} \left(I_{i\sigma(i)} \oplus z_1 a_{{i\sigma(i)}} 
\oplus z_2 b_{i\sigma(i)} \right)\\
& = \displaystyle \prod_{i=1}^{r} \left(z_1 a_{{i\sigma(i)}} \oplus z_2 b_{i\sigma(i)} \right) 
\displaystyle \prod_{i=r+1}^{n} \left(1\oplus z_1 a_{{ii}} \oplus z_2 b_{ii} \right)\\
& \geq  \displaystyle \prod_{i=1}^n \left(1 \oplus z_1 a_{{ii}} \oplus z_2 b_{ii} \right).
\end{align*}

This in turn gives, \[\prod_{i=1}^r \left(z_1 a_{{i\sigma(i)}} \oplus z_2 b_{i\sigma(i)} \right) \geq 
\displaystyle \prod_{i=1}^{r} \left(1 \oplus z_1 a_{{ii}} \oplus z_2 b_{ii} \right) \geq 
\displaystyle \prod_{i=1}^{r} \left( z_1 a_{{ii}} \oplus z_2 b_{ii} \right).\]

Now consider the following ratio for the ray $z_1= z_2 = t$. We then have, 
$$\dfrac{\displaystyle \prod_{i=1}^{r} \left(z_1 a_{{i\sigma(i)}} \oplus z_2 b_{i\sigma(i)} \right)}
{\displaystyle \prod_{i=1}^{r} \left( z_1 a_{{ii}} \oplus z_2 b_{ii} \right)} = 
\dfrac{\displaystyle \prod_{i=1}^{r} \left( a_{{i\sigma(i)}} \oplus  b_{i\sigma(i)} \right)}
{\displaystyle \prod_{i=1}^{r} \left( a_{{ii}} \oplus  b_{ii} \right)} \geq 1.$$

This gives, $\displaystyle \prod_{i=1}^{n} \left( a_{{i\sigma(i)}} \oplus  b_{i\sigma(i)} \right) 
\geq \displaystyle \prod_{i=1}^{n} \left( a_{{ii}} \oplus  b_{ii} \right)$, contradicting the initial assumption. 
Thus,  $P_{A,B}(z) = \displaystyle \prod_{i=1}^{n} \left(1\ \oplus\ a_{ii} z_1\ \oplus\ b_{ii} z_2\right)$.

$(2)\Rightarrow(1)$: Assume that
\[
P_{A,B}(z)=\prod_{i=1}^n \bigl(1\oplus a_{ii}z_1\oplus b_{ii}z_2\bigr)
\qquad \text{for all } z\in\mathbb{R}_+^2.
\]
The above holds in particular for $z_1 = z_2 = t$ for $t \in \mathbb{R}_+$.
Set $D:=A\oplus B$ and $d_{ij}:=a_{ij}\oplus b_{ij}$. For $t>0$, we have
\[
P_{A,B}(t,t) = \text{tdet}\!\bigl(I\oplus tA\oplus tB\bigr) = \text{tdet}\!\bigl(I\oplus t(A\oplus B)\bigr) 
=\text{tdet}(I\oplus tD).\] 
On the other hand, by the assumed factorization,
\[
P_{A,B}(t,t)=\prod_{i=1}^n \bigl(1\oplus t(a_{ii}\oplus b_{ii})\bigr)
=\prod_{i=1}^n \bigl(1\oplus t d_{ii}\bigr).
\] 
For a permutation $\sigma\in S_n$, define,
\[
f_\sigma(t):=\prod_{i=1}^n \bigl(I_{i\sigma(i)}\oplus t\,d_{i\sigma(i)}\bigr).
\] 
Then $\text{tdet}(I\oplus tD)=\max_{\sigma\in S_n} f_\sigma(t)$, while the identity term equals,
\[
f_{\mathrm{id}}(t)=\prod_{i=1}^n (1\oplus t d_{ii}).
\]
Since $\text{tdet}(I\oplus tD)=f_{\mathrm{id}}(t)$ for all $t>0$, we have,
\[
f_{\mathrm{id}}(t)\ \ge\ f_\sigma(t), \qquad \forall\,\sigma\in S_n,\ \forall\,t>0.
\]
This gives,
\begin{equation}\label{permutation-ratio-inequality}
\frac{f_{\mathrm{id}}(t)}{t^n}\ \ge\ \frac{f_\sigma(t)}{t^n}, \qquad \forall\,\sigma,\ \forall\, t>0.
\end{equation}

Now observe that,
\[
\frac{1\oplus t d_{ii}}{t}=\frac{\max\{1,td_{ii}\}}{t}=\max\left\{\frac{1}{t},\,d_{ii}\right\}
=\left(\frac{1}{t}\right)\oplus d_{ii},
\]
and for $i\neq \sigma(i)$, we have $I_{i\sigma(i)}=0$, so that
\[
\frac{I_{i\sigma(i)}\oplus t d_{i\sigma(i)}}{t}=\frac{t d_{i\sigma(i)}}{t}=d_{i\sigma(i)}.
\]
Hence, for every $\sigma$,
\[
\lim_{t\to\infty}\frac{f_{\mathrm{id}}(t)}{t^n}=\prod_{i=1}^n d_{ii},
\qquad
\lim_{t\to\infty}\frac{f_{\sigma}(t)}{t^n}=\prod_{i=1}^n d_{i\sigma(i)}.
\] 
Taking the limit as $t\to\infty$ in inequality \eqref{permutation-ratio-inequality} gives,
\[
\prod_{i=1}^n d_{ii}\ \ge\ \prod_{i=1}^n d_{i\sigma(i)}, \qquad \forall\,\sigma\in S_n.
\] 
Therefore,
\[
\text{tdet}(D)=\max_{\sigma\in S_n}\prod_{i=1}^n d_{i\sigma(i)}=\prod_{i=1}^n d_{ii}
=\prod_{i=1}^n (a_{ii}\oplus b_{ii}),
\]
which is exactly (1).
\end{proof}

From Example \ref{example-2.1}, we observe that the tropical determinant of $A\oplus B$ is given by the identity 
permutation. We have already seen that $P_{A,B}(z)$ is factored linearly, a direct consequence of Theorem 
\ref{nasc-linear factors}. The following is another example to illustrate this. 

\begin{example}\label{example-4}
	Let $A=\begin{pmatrix}
		10 & 0 & 2 & 0\\
		0 & 7 & 0 & 0\\
		0 & 0 & 0 & 0\\
		0 & 0 & 0 & 8
	\end{pmatrix}
	\quad \text{and} \quad
	B=\begin{pmatrix}
		4 & 0 & 0 & 0\\
		0 & 5 & 0 & 0\\
		3 & 0 & 9 & 0\\
		0 & 0 & 0 & 8
	\end{pmatrix}.$ Then $G_A$ has the single off-diagonal edge $1\to 3$, and $G_B$ has the single off-diagonal
edge $3\to 1$. In particular, $G_A$ and $G_B$ contain no directed multi-vertex cycles and 
so are triangularizable.
	
Set $D:=A\oplus B$. Then
	\[
	D=\begin{pmatrix}
		10 & 0 & 2 & 0\\
		0 & 7 & 0 & 0\\
		3 & 0 & 9 & 0\\
		0 & 0 & 0 & 8
	\end{pmatrix}.
	\] Since the only nonzero off-diagonal entries of $D$ are $d_{13}$ and $d_{31}$, the tropical
determinant $\text{tdet}(D)$ can be attained only by the identity permutation or by the transposition
	$\sigma=(1\,3)$. Therefore,
	\[
	\text{tdet}(D)=\max\!\left(d_{11}d_{22}d_{33}d_{44},\ d_{13}d_{31}d_{22}d_{44}\right)
	=\max(10\cdot 7\cdot 9\cdot 8,\ 2\cdot 3\cdot 7\cdot 8)=5040.
	\]
Hence $\text{tdet}(A\oplus B)=\prod_{i=1}^4 (a_{ii}\oplus b_{ii})$, and condition (1) of 
Theorem \ref{nasc-linear factors} holds.
	
Now, for $z=(z_1,z_2)\in\mathbb{R}_+^2$, we have
	\[
	M(z)=I\oplus z_1A \oplus z_2B =
	\begin{pmatrix}
		1\oplus 10z_1\oplus 4z_2 & 0 & 2z_1 & 0\\
		0 & 1\oplus 7z_1\oplus 5z_2 & 0 & 0\\
		3z_2 & 0 & 1\oplus 9z_2 & 0\\
		0 & 0 & 0 & 1\oplus 8z_1\oplus 8z_2
	\end{pmatrix}.
	\] 
Once again, the only permutations that can contribute are $\mathrm{id}$ and $\sigma=(1\,3)$. 
Hence,
	\[
	\text{tdet}(M(z))=
	\max\!\left(
	\prod_{i=1}^4 (1\oplus a_{ii}z_1\oplus b_{ii}z_2),\ (2z_1)(3z_2)\,(1\oplus 7z_1\oplus 5z_2)\,(1\oplus 8z_1\oplus 8z_2)
	\right).
	\]
	But
	\[
	(1\oplus 10z_1\oplus 4z_2)(1\oplus 9z_2)\ \ge\ (10z_1)(9z_2)=90\,z_1z_2\ \ge\ 6\,z_1z_2=(2z_1)(3z_2),
	\] 
and therefore
	\[
	\prod_{i=1}^4 (1\oplus a_{ii}z_1\oplus b_{ii}z_2)
	\ \ge\
	(2z_1)(3z_2)\,(1\oplus 7z_1\oplus 5z_2)\,(1\oplus 8z_1\oplus 8z_2),
	\] 
for all $z_1,z_2\ge 0$. Hence $\text{tdet}(M(z))=\prod_{i=1}^4 (1\oplus a_{ii}z_1\oplus b_{ii}z_2)$, and so
	\[
	P_{A,B}(z)=\text{tdet}(I\oplus z_1A\oplus z_2B)=\prod_{i=1}^4 (1\oplus a_{ii}z_1\oplus b_{ii}z_2),
	\] 
which illustrates Theorem \ref{nasc-linear factors}.
\end{example}

\begin{definition} \label{diagonally dominant}
A pair of triangularizable matrices $A,B \in M_n(\mathbb{R}_+)$ are said to be \textit{diagonally dominant}, if there 
exists a permutation matrix $P \in GL_n(\mathbb{R}_+)$ such that $A':= P^{-1}AP, \  B':= P^{-1}BP$ 
are row diagonal dominant for every row; that is, 
$\max\{a'_{ij}, b'_{ij}\} \ \leq\ \max\{a'_{ii}, b'_{ii}\}, \ \forall \ \, i, j$.
\end{definition}

We end the paper with the following corollary, whose proof follows from Theorem 
\ref{nasc-linear factors}.

\begin{corollary}\label{diagonal dominance-cor}
Let $A, B \in M_n(\mathbb{R}_+)$ be triangularizable matrices that are diagonally dominant. 
Then their characteristic polynomial $P_{A,B}(z)$ is a product of $n-$linear factors.  
\end{corollary}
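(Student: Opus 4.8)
The plan is to specialize Theorem \ref{nasc-linear factors} by verifying its condition (1) for the pair at hand, after first normalizing the matrices by a permutation. The preliminary observation I would record is that both the tropical determinant and the characteristic polynomial are invariant under simultaneous conjugation by a permutation matrix. Indeed, if $A' = P^{-1}AP$ and $B' = P^{-1}BP$, then conjugation by $P$ distributes over the entrywise maximum, so $M'(z) = I \oplus z_1 A' \oplus z_2 B' = P^{-1}M(z)P$; and since reindexing the rows and columns of a matrix by the same permutation merely relabels the permutations in the defining maximum, one has $\text{tdet}(P^{-1}MP) = \text{tdet}(M)$ and hence $P_{A',B'}(z) = P_{A,B}(z)$. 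Consequently, after replacing $A,B$ by the matrices $A',B'$ furnished by Definition \ref{diagonally dominant}, I may assume without loss of generality that $A$ and $B$ are themselves row diagonally dominant, i.e. $\max\{a_{ij},b_{ij}\}\le \max\{a_{ii},b_{ii}\}$ for all $i,j$. These normalized matrices remain triangularizable, since $G_{A'}\cong G_A$ and $G_{B'}\cong G_B$ are acyclic by Theorem \ref{traingle}, so the hypotheses of Theorem \ref{nasc-linear factors} are preserved.

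Next I would set $M = A\oplus B$, so that $m_{ij} = a_{ij}\oplus b_{ij} = \max\{a_{ij},b_{ij}\}$, and observe that the diagonal dominance hypothesis reads exactly as $m_{ij}\le m_{ii}$ for all $i,j$; that is, in each row the largest entry sits on the diagonal. The crux of the argument is then a one-line comparison: for an arbitrary $\sigma\in S_n$, every factor of the ordinary product $\prod_{i=1}^n m_{i\sigma(i)}$ satisfies $m_{i\sigma(i)}\le m_{ii}$, and since all entries are nonnegative the products themselves compare, $\prod_{i=1}^n m_{i\sigma(i)}\le \prod_{i=1}^n m_{ii}$. Taking the maximum over $\sigma$ shows that the tropical determinant is attained at the identity permutation, so $\text{tdet}(A\oplus B) = \prod_{i=1}^n (a_{ii}\oplus b_{ii})$, which is precisely condition (1) of Theorem \ref{nasc-linear factors}.

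With condition (1) in hand, the implication $(1)\Rightarrow(2)$ of Theorem \ref{nasc-linear factors} immediately yields $P_{A,B}(z) = \prod_{i=1}^n (1\oplus a_{ii}z_1\oplus b_{ii}z_2)$, a product of $n$ linear factors; transporting back through $P_{A',B'}=P_{A,B}$ gives the conclusion for the original pair.

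I do not anticipate a serious obstacle, as the statement is essentially a specialization of Theorem \ref{nasc-linear factors}. The only points requiring care are the permutation-invariance of $\text{tdet}$ and $P_{A,B}$ used in the reduction, and the mild edge case in which some diagonal entry $m_{ii}$ vanishes; dominance then forces the entire $i$th row of $M$ to be zero, so both sides of the identity in (1) equal zero and the termwise comparison above continues to hold verbatim.
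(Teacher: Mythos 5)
Your proof is correct and follows exactly the route the paper intends: it verifies condition (1) of Theorem \ref{nasc-linear factors} for the permuted pair via the termwise bound $m_{i\sigma(i)}\le m_{ii}$ and then invokes the implication $(1)\Rightarrow(2)$. The paper gives no details beyond citing Theorem \ref{nasc-linear factors}, and your write-up (including the permutation-invariance of $\mathrm{tdet}$ and $P_{A,B}$, and the zero-diagonal edge case) supplies precisely the missing verification.
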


\subsection{Algorithmic implications}\label{subsec:algorithms}\hspace*{0.5cm}

Theorems \ref{traingle} and \ref{first result} reduce (simultaneous) triangularization questions in max-algebras to 
standard directed graph theory problems. In particular, a triangularizing generalized permutation matrix can be 
constructed explicitly from a topological ordering of the associated digraph. In this section, we provide an algorithmic 
approach to determine whether a given max-algebraic matrix is triangularizable, or whether a pair of matrices is 
simultaneously triangularizable, and, in both cases, give the permutation matrix that puts these matrices to their respective triangular forms. As a consequence of Theorem \ref{nasc-linear factors}, we also provide an algorithm to determine whether 
the characteristic polynomial of a pair of matrices is linearly factorizable, and if so, express it as a product of linear factors. 

\smallskip

\noindent\textbf{Algorithm 1} : Triangularizability test and construction of a triangularizer.

\smallskip
\noindent\textbf{Input:} $A=(a_{ij}) \in M_n(\mathbb{R}_+)$.

\noindent\textbf{Output:} Either (i) \emph{Not triangularizable}, or (ii) a permutation
$v_1,\dots,v_n$ and a permutation matrix $P$ such that $P^{-1}AP$ is upper triangular.

\smallskip
\noindent\textbf{Procedure:}
\begin{enumerate}
\item Construct the digraph $G_A$ on $\{1,\dots,n\}$ with an edge $i\to j$ whenever
$a_{ij}>0$ and $i\neq j$.
\item Run a topological ordering routine on $G_A$:
\begin{enumerate}
\item Compute $\deg^{-}(v)$ (the indegree) of each vertex $v$.
\item Initialize a queue (or stack) $Q$ with all vertices of indegree $0$.
\item Initialize an empty list $L$.
\item While $Q$ is nonempty, remove a vertex $u$ from $Q$, append $u$ to $L$, and for each
outgoing edge $u \to w$, reduce $\deg^{-}(w)$ by $1$; if $\deg^{-}(w)=0$, insert $w$ into $Q$.
\end{enumerate}
\item If at the end $|L| < n$, then $G_A$ contains a directed cycle; return \emph{Not triangularizable}.
\item Otherwise, write $L=(v_1,\dots,v_n)$ and form the permutation matrix $P$ by
	\[
	P e_k = e_{v_k}\qquad (k=1,\dots,n);
	\]
equivalently, $P_{v_k,k} = 1$ for all $k$ and all other entries are $0$. 
\item Return $(v_1,\dots,v_n)$ and $P$.
\end{enumerate}

\smallskip
\noindent\textbf{Remark on complexity:} If $m_A$ is the number of nonzero off-diagonal entries of $A$,
then $G_A$ has $m_A$ edges except self-loops, and the above runs in $O(n+m_A)$ time.

\medskip

\noindent\textbf{Algorithm 2}: Simultaneous triangularizability test for a pair and construction.

\smallskip
\noindent\textbf{Input:} $A, B \in M_n(\mathbb{R}_+)$.

\noindent\textbf{Output:} Either (i) \emph{Not simultaneously triangularizable}, or (ii) a permutation
matrix $P$ such that both $P^{-1}AP$ and $P^{-1}BP$ are upper triangular.

\smallskip
\noindent\textbf{Procedure:}
\begin{enumerate}
\item First apply Algorithm 1 to $A$ and to $B$.
If either $A$ or $B$ is not triangularizable, then return \emph{Not simultaneously triangularizable}.
\item Construct the union digraph $G:=G_A\cup G_B$ on $\{1,\dots,n\}$ with an edge $i\to j$ whenever
$\max\{a_{ij},b_{ij}\} > 0$ and $i \neq  j$. (Equivalently, $G=G_{A\oplus B}$.)
\item Run a topological ordering routine on $G$ (as in Algorithm 1).
\item If $G$ has a directed cycle, return \emph{Not simultaneously triangularizable}.
\item Otherwise, if $L = (v_1,\dots,v_n)$ is a topological order, form $P$ by $P e_k=e_{v_k}$.
Then $P^{-1}AP$ and $P^{-1}BP$ are both upper triangular. 
\item Return $P$.
\end{enumerate}

\smallskip
\noindent\textbf{Remark on complexity:} If $m_A,m_B$ are the numbers of nonzero off-diagonal entries of
$A$ and $B$, then $G$ has at most $m_A+m_B$ edges, except self-loops, and the above runs in $O(n+m_A+m_B)$ time.

\medskip

\noindent\textbf{Algorithm 3}: Test for linear factorization in Theorem \ref{nasc-linear factors} and compute $P_{A,B}$ 
when factorizable.

\smallskip
\noindent\textbf{Input:} $A, B \in M_n(\mathbb{R}_+)$.

\noindent\textbf{Output:} Either (i) \emph{Not linearly factorizable}, or (ii) the characteristic
polynomial $P_{A,B}(z)$ in the explicit linear factor form
\[
P_{A,B}(z)=\prod_{i=1}^n (1\oplus \alpha_i z_1 \oplus \beta_i z_2),
\]
together with the coefficients $(\alpha_i,\beta_i)$.

\smallskip
\noindent\textbf{Procedure:}
\begin{enumerate}
\item Form $D:= A\oplus B = (d_{ij})$, where $d_{ij}:= a_{ij} \oplus b_{ij} = \max\{a_{ij}, b_{ij}\}$.
\item Compute
	\[
	T:= \text{tdet}(D)=\max_{\sigma\in S_n}\ \prod_{i=1}^n d_{i\sigma(i)}.
	\]
	\item Compute the diagonal product
	\[
	D_0:= \prod_{i=1}^n d_{ii} = \prod_{i=1}^n (a_{ii}\oplus b_{ii}).
	\]
\item If $T\neq D_0$, return \emph{Not linearly factorizable}. (In this case,
$P_{A,B}(z)$ is still defined by $P_{A,B}(z) = \text{tdet}(I\oplus z_1A\oplus z_2B)$, but we do not attempt
to expand it.)
\item If $T = D_0$, then by Theorem \ref{nasc-linear factors} the characteristic polynomial is a product of $n$ linear
factors. Set
	\[
	\alpha_i:=a_{ii},\qquad \beta_i:=b_{ii}\qquad (i=1,\dots,n),
	\]
	and output
	\[
	P_{A,B}(z)=\text{tdet}(I\oplus z_1A\oplus z_2B)=\prod_{i=1}^n (1\oplus \alpha_i z_1 \oplus \beta_i z_2)
	=\prod_{i=1}^n (1\oplus a_{ii}z_1 \oplus b_{ii}z_2).
	\]
\end{enumerate}

\smallskip
\noindent\textbf{Remark on complexity:} In the dense model, Step 2 can be carried out in $O(n^3)$ time
by standard assignment algorithms; sparse variants can be faster. When $T = D_0$, the algorithm outputs
$P_{A,B}(z)$ in factored form. Expanding the product may produce up to $3^n$ terms and is not needed
for the factorization criterion.

\smallskip
\noindent
As a final remark, we wish to point out that the results presented above carry over to min-algebras as well.

\section{Concluding remarks}

We summarize the main results obtained in this paper.

\begin{itemize}
\item A concrete criteria for triangularizability and simultaneous triangularizability of matrices in terms of acyclicity 
of the associated digraph is brought out. An example that distinguishes the classical and max-algebra contexts justifies 
this characterization.
\item Prominent results on simultaneous triangularization in classical linear algebra literature are generalized to 
max-algebras; the proofs involve nice graph theoretic techniques. These results bring out the similarities between these two 
areas.
\item The notion of unicellular matrices is introduced in max-algebra context, along with an example. To the best of 
our knowledge, this seems new in the context of max-algebras.
\item The notion of a characteristic polynomial of a pair of max-algebraic matrices in terms of the tropical determinant, 
along with an example from optimization to motivate the same, is introduced. Linear factorizations of this polynomial and 
its relationship with simultaneous triangularization in max-algebras are brought out.
\item Algorithms for triangularizability of a matrix, simultaneous triangularizability of a pair of matrices as well as 
linear factorizations of the tropical determinant of a pair of matrices, all in the max-algebra context, are brought out.
\item Nontrivial examples illustrating the results obtained are presented.
\item In the one variable case, the eigencone is generated by the columns of the Kleene star. In the two variable case, 
it is not clear at this stage whether the Kleene star provides all eigenvectors defined in Definition \ref{def:eigenvector-kernel}.  The authors therefore pose the problem of determining all eigenvectors of a pair $(A, B)$ of matrices and of characterizing 
the corresponding eigencone in this setting.
\item Finally, possible applications to joint spectral radius and periodic points are being investigated and we hope to 
consolidate the same soon.
\end{itemize}

\medskip
\noindent
\textbf{Achnowledgements:} The authors are grateful to the anonymous referee for a meticulous reading of the paper 
and for numerous comments and suggestions that has improved the paper. Askar Ali also thanks School of Mathematics, 
IISER Thiruvananthapuram, as part of this work was done during his tenure there.
 
\medskip
\noindent
{\bf Declarations:} 
\begin{itemize}
	\item All the authors have equal contributions in this work and declare that there is no conflict of interest.
	\item No funding was obtained or used for this work.
	\item No data were used in this work.
\end{itemize}

\bibliographystyle{amsplain}

\end{document}